\newtheorem{theo}{Theorem}[section]
\newtheorem{coro}[theo]{Corollary}
\newtheorem{prop}[theo]{Proposition}
\newtheorem{lemm}[theo]{Lemma}
\theoremstyle{definition}
\newtheorem{defi}[theo]{Definition}
\theoremstyle{remark}
\newtheorem{rema}[theo]{Remark}
\newcommand{\bb}[1]{\mathbb{#1}}
\newcommand{\al}[1]{\mathcal{#1}}
\newcommand{\sr}[1]{\mathscr{#1}}
\newcommand{\ak}[1]{\mathfrak{#1}}
\newcommand{\gen}[1]{\left\langle #1\right\rangle}
\newcommand{\sm}{\smallsetminus}
\newcommand{\ra}{\rightarrow}
\newcommand{\lra}{\longrightarrow}
\newcommand{\x}[1]{\text{#1}}
\begin{document}
\title{On very stablity of  principal  $G-$bundles}
\author{Hacen ZELACI}
\address{Mathematical Institute of the university of Bonn.}
\curraddr{}
\email{z.hacen@gmail.com}
\date{\today}
\subjclass[2010]{Primary 14H60, 14H70.}

\begin{abstract}
	Let $X$ be a smooth irreducible projective curve. In this notes, we generalize the main result of \cite{PA} to principal $G-$bundles for any semisimple linear algebraic group $G$. After defining  very stability of principal $G-$bundles, we show that this definition is equivalent to the fact that  the Hitchin fibration restricted to the space of Higgs fields on that principal bundle is finite. We also study the relation between  very stability and  other stability conditions in the case of $\x{SL}_2-$bundles. %show that a very stable principal bundle is semistable (in the sens of Ramanathan). % We also show an equality between two canonical maps associated to the Hitchin fibration. %study the codimension of the locus of very stable vector bundles.
\end{abstract}
\maketitle
\tableofcontents
\section{Introduction}
Let $X$ be a smooth irreducible projective curve over $\bb C$ of genus $g\geqslant 2$. Denote  its canonical bundle by $K_X$.  Let $E$ be a stable vector bundle of degree $0$ and rank $r$  over $X$.
Let $\sr H_E$ be the Hitchin map:$$\sr H_E:H^0(X,E\otimes E^*\otimes K_X)\lra W:=\bigoplus_{i=1}^r H^0(X,K_X^i).$$
defined by associating to a Higgs field  $\phi\colon E\ra E\otimes K_X$ the coefficients of its characteristic polynomial $$((-1)^i\x{Tr}(\Lambda^i\phi))_{i=1,\dots,r}\in W.$$  
Following \cite{LG}, the vector bundle $E$ is called very stable if $\sr H_E^{-1}(0)=\{0\}$. In other words, the vector bundle $E$ is very stable if and only if  it has no nilpotent Higgs field other than $0$.  With these notations, the following theorem has been proven recently in \cite{PA}. 
\begin{theo}\label{PA}
	The vector bundle $E$ is very stable if and only if $\sr H_E$ is finite.
\end{theo}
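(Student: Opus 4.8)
The plan is to exploit the natural $\bb C^*$-actions and reduce the statement to a standard fact about homogeneous morphisms of affine cones. Both the source $V:=H^0(X,E\otimes E^*\otimes K_X)$ and the target $W$ are finite-dimensional $\bb C$-vector spaces, hence affine varieties; scale a Higgs field $\phi$ by $t\in\bb C^*$ as $\phi\mapsto t\phi$, and let $\bb C^*$ act on the summand $H^0(X,K_X^i)$ of $W$ with weight $i$. Since $\x{Tr}(\Lambda^i\phi)$ is homogeneous of degree $i$ in $\phi$, the map $\sr H_E$ is $\bb C^*$-equivariant. All weights occurring on $W$ are strictly positive, so $0$ is the unique $\bb C^*$-fixed point of $W$ and $\sr H_E^{-1}(0)$ is a $\bb C^*$-stable closed subcone of $V$; in particular $\sr H_E^{-1}(0)$ is either $\{0\}$ or infinite, since any nonzero point sweeps out an infinite orbit. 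This already yields the ``only if'' direction: if $\sr H_E$ is finite then $\sr H_E^{-1}(0)$ is a finite set, hence equal to $\{0\}$, i.e. $E$ is very stable.

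For the converse, write $R=\bb C[W]$ and $S=\bb C[V]$ for the coordinate rings, graded by the above weights, so that $R_0=S_0=\bb C$, both are nonnegatively graded and generated in positive degree, and $\sr H_E^{\#}\colon R\to S$ is a homomorphism of graded rings. Because $V$ and $W$ are affine, $\sr H_E$ is a finite morphism if and only if $S$ is a finitely generated $R$-module via $\sr H_E^{\#}$, and by the graded Nakayama lemma this holds if and only if $S/R_+S$ is a finite-dimensional $\bb C$-vector space, where $R_+$ denotes the irrelevant ideal. Now $\x{Spec}(S/R_+S)$ is precisely the scheme-theoretic fibre $\sr H_E^{-1}(0)$, and by definition very stability of $E$ says that its reduced structure is the single point $0$. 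Hence the irrelevant ideal of $S/R_+S$ equals its nilradical; as $S/R_+S$ is a finitely generated $\bb C$-algebra this ideal is nilpotent, and since $S/R_+S$ is generated in degree one by the images of finitely many (necessarily nilpotent) coordinates it is finite-dimensional over $\bb C$. Therefore $\sr H_E$ is finite.

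The one point that genuinely requires the homogeneity — and which I expect to be the crux — is the passage from the \emph{set-theoretic} statement ``$\sr H_E^{-1}(0)=\{0\}$'' to the \emph{module-theoretic} statement ``$S$ is module-finite over $R$''. Without a grading this implication fails (a quasi-finite morphism of affine spaces need not be finite); it is the graded Nakayama lemma, applied to the fact that $\sr H_E^{\#}$ respects the weight gradings, that forces the scheme-theoretic fibre over $0$ to have finite length as soon as it is set-theoretically a point. Alternatively one can argue geometrically: use the $\bb C^*$-equivariance and the valuative criterion to prove directly that $\sr H_E$ is universally closed, hence proper (it is separated and of finite type), and then observe that all its fibres are finite — once closedness is known, the locus in $W$ where the fibre has positive dimension is a $\bb C^*$-stable closed set whose closure would necessarily contain $0$, contradicting $\sr H_E^{-1}(0)=\{0\}$ — so that $\sr H_E$ is finite by Zariski's main theorem. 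This second route is the one that should generalize most transparently to principal $G$-bundles, since it only uses that the Hitchin base carries a positively graded $\bb C^*$-action compatible with the Hitchin map.
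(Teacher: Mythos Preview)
Your proof is correct, but it proceeds by a genuinely different route from the paper's.  Both reduce to the same abstract statement --- a morphism $f\colon\bb A^n\to\bb A^n$ given by homogeneous polynomials with $f^{-1}(0)=\{0\}$ is finite --- but the paper proves this by an explicit combinatorial induction: for each $k$ it dehomogenizes by setting $X_k=1$, uses that the resulting polynomials $P_i^k$ have no common zero in $\bb A^{n-1}$ to write $1=\sum U_i^kP_i^k$, rehomogenizes to obtain $X_k^c\in\gen{P_1,\dots,P_n}$ with coefficients of degree strictly less than $c$, and then shows by induction on total degree that the monomials $X^\alpha$ with all $\alpha_i<c$ generate $S=\bb C[X_1,\dots,X_n]$ as a module over $R=\bb C[P_1,\dots,P_n]$.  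Your argument replaces all of this bookkeeping with graded Nakayama plus the Nullstellensatz: since the map $R\to S$ is graded, $S$ is $R$--finite iff $S/R_+S$ is finite-dimensional, and the latter follows because the fibre over $0$ is a point.  It is worth noting that the paper's own remark after the proof contemplates the bare Nullstellensatz route (getting $\ak m^N\subset\gen{P_i}$) and sets it aside on the grounds that ``one cannot control the degrees of the coefficients $\tilde U_i^k$''; your argument shows that this control is unnecessary once graded Nakayama is invoked --- equivalently, since the $P_i$ are homogeneous, one may simply take homogeneous components of any relation $X_k^N=\sum Q_iP_i$ and the degree bound is automatic.  The trade-off is that the paper's argument is entirely elementary and produces an explicit finite generating set, whereas yours is shorter and more conceptual but imports a standard lemma.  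Your alternative geometric sketch via the valuative criterion is closer in spirit to the original approach of Pauly--Pe\'on-Nieto that the paper is seeking to replace.
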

We will give a general and more elementary proof for this. \\

Let $G$ be a semisimple connected linear algebraic group. In a natural way, a principal $G-$bundle $E$ over $X$ is called \emph{very stable}  if the fiber of the Hitchin map (see Section \ref{pre}) over $0$ is reduced to $0$. We will show that this is equivalent to say that the bundle $\x{ad}(E)\otimes K_X$ has non nilpotent section with respect to the adjoint action.\\
Our main result is then the following 
\begin{theo}\label{main}
	  The $G-$bundle $E$ is very stable if and only if $\sr H_E$ is finite.
\end{theo}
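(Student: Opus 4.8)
The plan is to reduce the statement to a standard fact about $\mathbb{C}^{*}$-equivariant morphisms, in the elementary spirit of Theorem~\ref{PA}. Fix homogeneous generators $p_1,\dots,p_k$ of the invariant ring $\mathbb{C}[\mathfrak{g}]^{G}$, of degrees $d_1,\dots,d_k$; then the Hitchin map is literally the morphism $\sr H_E\colon H^0(X,\mathrm{ad}(E)\otimes K_X)\to\bigoplus_{i=1}^{k}H^0(X,K_X^{d_i})$, $\phi\mapsto(p_i(\phi))_i$, between two finite-dimensional $\mathbb{C}$-vector spaces (both are spaces of sections of coherent sheaves on the projective curve $X$). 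The scaling action $\phi\mapsto t\phi$ on the source, together with the action of weight $d_i$ on the $i$-th summand of the target, makes $\sr H_E$ equivariant since $p_i(t\phi)=t^{d_i}p_i(\phi)$; the point is that all weights are strictly positive and $0$ is the unique fixed point on each side.

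One implication is immediate: if $\sr H_E$ is finite then $\sr H_E^{-1}(0)$ is a finite set, but it is $\mathbb{C}^{*}$-stable and every nonzero point lies on an infinite (one-dimensional) orbit whose closure contains $0$; hence $\sr H_E^{-1}(0)=\{0\}$, i.e.\ $E$ is very stable.

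For the converse I would establish the following lemma and apply it to $f=\sr H_E$: if $f\colon V\to W$ is a $\mathbb{C}^{*}$-equivariant morphism of vector spaces on which $\mathbb{C}^{*}$ acts linearly with all weights strictly positive, then $f^{-1}(0)=\{0\}$ implies that $f$ is finite. Indeed, write $R=\mathbb{C}[W]$ and $S=\mathbb{C}[V]$, graded by the weights, so that these are connected graded $\mathbb{C}$-algebras and $f^{\#}\colon R\to S$ is a graded homomorphism. The hypothesis $f^{-1}(0)=\{0\}$ says that the ideal $f^{\#}(R_{+})S\subseteq S$ has zero locus the origin, hence radical $S_{+}$; in particular a power of each coordinate function on $V$ lies in $f^{\#}(R_{+})S$, so $S/f^{\#}(R_{+})S$ is a finite-dimensional $\mathbb{C}$-algebra. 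Graded Nakayama then shows that homogeneous lifts of a $\mathbb{C}$-basis of $S/f^{\#}(R_{+})S$ generate $S$ as an $R$-module, so $f$ is finite. Since very stability of $E$ is by definition the equality $\sr H_E^{-1}(0)=\{0\}$, the theorem follows.

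I do not anticipate a real obstacle: the whole content is the commutative-algebra lemma above, and the only things to watch are that the $\mathbb{C}^{*}$-weights are genuinely all positive — so that the gradings are connected and graded Nakayama applies — and that one may reason set-theoretically, since finiteness of a morphism of affine varieties can be checked on reduced fibers. It is worth recording first that very stability forces $\dim H^0(X,\mathrm{ad}(E)\otimes K_X)=\dim\bigoplus_{i}H^0(X,K_X^{d_i})$, equivalently $H^0(X,\mathrm{ad}(E))=0$, since otherwise every fiber of $\sr H_E$ would be positive-dimensional.
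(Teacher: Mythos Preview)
Your proposal is correct and follows the same overall strategy as the paper: both reduce the theorem to a purely algebraic lemma asserting that a $\mathbb{C}^*$-equivariant polynomial map $f$ between affine spaces with $f^{-1}(0)=\{0\}$ is finite. The difference is in how that lemma is proved. The paper formulates it as Theorem~\ref{2} for a map $\mathbb{A}^n\to\mathbb{A}^n$ given by homogeneous polynomials and argues explicitly: dehomogenizing with respect to $X_k$ and using the Nullstellensatz one writes $1=\sum_i U_i^kP_i^k$, rehomogenizes to $X_k^c=\sum_i\tilde U_i^kP_i$ with $\deg\tilde U_i^k<c$, and then shows by induction on the total degree that the monomials $X^\alpha$ with all $\alpha_i<c$ generate $\mathbb{C}[X_1,\dots,X_n]$ over $\mathbb{C}[P_1,\dots,P_n]$. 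Your route via graded Nakayama is shorter and slightly more general: it does not require $\dim V=\dim W$, so the dimension equality $\dim H^0(\mathrm{ad}(E)\otimes K_X)=\dim W_G$ --- which the paper imports from Hitchin in order to be in the setting of Theorem~\ref{2} --- becomes in your argument an incidental consequence rather than an input. In exchange, the paper's argument is entirely elementary and yields an explicit finite generating set.
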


Our proof is purely algebraic and is independent of the geometry of the moduli space of semistable $G-$Higgs pairs. In particular it induces an elementary  proof of Theorem \ref{PA}. Actually we will show the following 
\begin{theo} \label{main2}
	Let $f:\bb A^n\ra \bb A^n$ be a morphism given by homogeneous polynomials such that $f^{-1}(0)=\{0\}$, then $f$ is finite. \\
\end{theo}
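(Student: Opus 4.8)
The plan is to reduce to the graded-algebra characterization of finiteness. Write $f = (f_1,\dots,f_n)$ with each $f_i \in \bb C[x_1,\dots,x_n]$ homogeneous of some degree $d_i \geqslant 1$, and consider the ideal $I = (f_1,\dots,f_n)$. The hypothesis $f^{-1}(0)=\{0\}$ says exactly that the only common zero of the $f_i$ in $\bb A^n$ is the origin, so $V(I) = \{0\}$ as a set. Since each $f_i$ is homogeneous, $I$ is a homogeneous ideal, and $V(I) \subseteq \{0\}$ forces $\sqrt{I} \supseteq (x_1,\dots,x_n) =: \ak m$ by the Nullstellensatz. Hence there is an integer $N$ with $x_j^N \in I$ for every $j$, i.e. each $x_j$ is integral over the subring $R := \bb C[f_1,\dots,f_n] \subseteq S := \bb C[x_1,\dots,x_n]$ — in fact it satisfies the monic relation $x_j^N \in \sum_i f_i S$, which after collecting terms can be massaged into a monic polynomial relation over $R$.

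From here the key step is to upgrade "every $x_j$ is integral over $R$" to "$S$ is a finitely generated $R$-module." This is standard: $S$ is generated as a $\bb C$-algebra by $x_1,\dots,x_n$, each of which is integral over $R$, so $S$ is integral over $R$ and, being finitely generated as an algebra, is module-finite over $R$. Concretely, $S$ is spanned over $R$ by the finitely many monomials $x_1^{a_1}\cdots x_n^{a_n}$ with each $a_j < N$. Therefore $f\colon \bb A^n \to \bb A^n$, which on coordinate rings is the inclusion $R \hookrightarrow S$ (here we use that $f$ being dominant or at least that $R \to S$ is injective — actually injectivity is automatic since $S$ is a domain and we only need module-finiteness of the extension $\bb C[y_1,\dots,y_n] \to S$, $y_i \mapsto f_i$, which factors through $R$), makes $S$ a finite module over the polynomial ring in the target coordinates. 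A finite morphism is precisely one that is affine with module-finite structure map, so $f$ is finite.

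One technical point deserves care: the map on rings induced by $f$ is $\bb C[y_1,\dots,y_n] \to S$ sending $y_i \mapsto f_i$, and this need not be injective a priori; what we have shown is that $S$ is a finite module over the image $R$, hence a fortiori a finite module over $\bb C[y_1,\dots,y_n]$ acting through this map. Module-finiteness of the structure homomorphism (injective or not) is exactly the algebraic condition defining a finite morphism of affine schemes, so no separate argument is needed. A clean way to organize the whole proof is: (i) homogeneity $\Rightarrow$ $I$ homogeneous; (ii) $V(I)=\{0\}$ $\Rightarrow$ $\ak m^N \subseteq I$ for some $N$, by the Nullstellensatz applied to the irrelevant ideal; (iii) deduce each $x_j$ integral over $R$; (iv) conclude $S$ module-finite over $R$, hence over $\bb C[y_1,\dots,y_n]$; (v) invoke the definition of finite morphism.

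I expect the main obstacle to be purely expository rather than mathematical: making precise, in step (iii), how the relations $x_j^N = \sum_i g_{ij} f_i$ with $g_{ij} \in S$ yield a genuine monic integral dependence of $x_j$ over $R$ — one wants to avoid circularity, and the cleanest route is to bypass explicit integral equations entirely and instead argue that $S/\ak m^N S$ ... wait, more simply: observe $S$ is generated over $R$ by monomials of bounded degree because $\ak m^N \subseteq I \subseteq RS$ lets one rewrite any high-degree monomial, via descending induction on degree, as an $R$-combination of monomials of degree $< N$. That inductive rewriting is elementary and sidesteps the need to write down integral equations at all. Everything else is a direct application of the Nullstellensatz and the elementary theory of finite morphisms; no input from Higgs bundles or the curve $X$ is used, which is why this also reproves Theorem \ref{PA}.
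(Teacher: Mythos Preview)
Your proof is correct and follows the same overall strategy as the paper: show that $S=\bb C[x_1,\dots,x_n]$ is a finite module over $R=\bb C[f_1,\dots,f_n]$ by producing, for each high-degree monomial, a relation expressing it as an $R$-combination of strictly lower-degree elements, and then inducting on total degree. The difference lies only in how those relations are obtained. The paper dehomogenizes by setting $X_k=1$, uses that the resulting $P_i^k$ have no common zero in $\bb A^{n-1}$ to write $1=\sum_i U_i^k P_i^k$, and then rehomogenizes to get $X_k^c=\sum_i \tilde U_i^k P_i$ with the explicit bound $\deg \tilde U_i^k<c$; the induction then shows that monomials with each exponent $<c$ generate. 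You instead invoke the Nullstellensatz directly to get $\ak m^N\subseteq I$ and induct from there.

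It is worth noting that the paper's own Remark immediately after the proof asserts that the direct Nullstellensatz route ``can not control the degrees of the coefficients $\tilde U_i^k$.'' Your argument shows this concern is unfounded, but you should make the reason explicit, since in your write-up it is only implicit: because $I$ is homogeneous and each $d_i\geqslant 1$, any relation $x^\alpha=\sum_i g_i f_i$ with $|\alpha|=m$ may, after taking degree-$m$ components, be replaced by one with $g_i$ homogeneous of degree $m-d_i<m$. That single sentence is what makes your ``descending induction on degree'' go through; without it the step ``$\ak m^N\subseteq I\subseteq RS$ lets one rewrite any high-degree monomial as an $R$-combination of monomials of degree $<N$'' is exactly the point the paper flags as problematic. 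Once stated, your route is slightly shorter than the paper's, avoiding the dehomogenization detour entirely.
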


I want to thank  C. Pauly  for suggesting this problem and for useful remarks that considerably improve this notes, and D. Huybrechts for useful discussions. I also want to thank I. Grosse-Brauckmann and A. Pe\'on-Nieto. \\

\section{Preliminaries}\label{pre}  We recall here the definitions and the basic facts about principal Higgs bundles and the Hitchin fibration. 

Let $G$ be a semisimple linear  algebraic group. A principal $G-$bundle $E$ over $X$ is a smooth variety $E\ra X$ with a free (right) $G-$action which is equivariant with respect to the trivial action on $X$ such that it is locally trivial in the \'etale topology, i.e. there exists a covering  $(U_i)$ of $X$ such $E|_{U_i}\cong U_i\times G$ with the right standard $G-$action. 

Let $E$ be a $G-$bundle. Given a finite dimensional representation $\rho\colon G\ra \x{GL}(V)$, $E$ induces a vector bundle $$E(V)=E\times_G V\coloneqq E\times V/G,$$ for the diagonal action of $G$ on $E\times V$. Using this, we say that $E$ is semistable if there exists a  (equivalently for any) \emph{faithful} representation   $\rho\colon G\ra GL(V)$ such that $E(V)$ is a semistable vector bundle over $X$.

Let $P\subset G$ be a subgroup. A reduction of the structure group of the $G-$bundle $E$ to $P$ is a section $\sigma:X\ra E(G/P)$. The pullback $\sigma^*E$ of $E$ via $\sigma$ is a $P-$bundle over $ X$ which is also called abusively a reduction of the structure group of $E$ to $P$.  Remark that $\sigma^*E(G)\cong E$. 

Ramanathan in his thesis, gave the following definition of semistability: 
\begin{defi}
	The $G-$bundle $E$ is semistable if for any maximal parabolic subgroup $P\subset G$ and for any reduction of the structure group  $\sigma:X\lra E(G/P)$, one has $$\x{deg}(\sigma^*T_{E(G/P)})\geqslant 0,$$
	where $T_{E(G/P)}=E(\ak{g}/\ak{p})$ is the tangent bundle. We say that $E$ is stable if the above inequality is strict. 
\end{defi}
Actually the two definitions are equivalent
\begin{theo}[\cite{R1}, \cite{R2}]
	The $G-$bundle $E$ is semistable if and only if it is semistable in the sense of Ramanathan.
\end{theo}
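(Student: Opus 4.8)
The plan is to prove both implications through a dictionary relating reductions of $E$ to parabolic subgroups on one side, and filtrations of associated vector bundles on the other, and to close the logical circle $(\x{Ramanathan}) \Rightarrow (E(V)\text{ semistable for every faithful }\rho) \Rightarrow (E(V)\text{ semistable for some faithful }\rho) \Rightarrow (\x{Ramanathan})$, which simultaneously yields the parenthetical ``equivalently for any'' in the definition. The middle implication is trivial, so the work is in the two outer ones.

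I would first set up a numerical reformulation of Ramanathan's condition. For a parabolic $P$, a reduction $\sigma\colon X\to E(G/P)$ with reduced bundle $E_\sigma=\sigma^*E$, and a character $\chi$ of $P$, write $E_\sigma(\chi)$ for the associated line bundle on $X$. Since $\ak g/\ak p$ is, as a $P$-module, the nilradical of the opposite parabolic, the determinant character $\det(\ak g/\ak p)$ equals $-\chi_P$, where $\chi_P$, the sum of the positive roots not lying in the Levi, is dominant; consequently $\deg\sigma^*T_{E(G/P)}=-\deg E_\sigma(\chi_P)$, and Ramanathan's inequality reads $\deg E_\sigma(\chi_P)\le 0$. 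A dominant character of an arbitrary parabolic is a nonnegative combination of the fundamental weights indexed by the simple roots defining it, each of which comes from a maximal parabolic, and a reduction to $P$ induces reductions to all maximal parabolics containing $P$; hence Ramanathan's condition for all maximal parabolics is equivalent to the statement that $\deg E_\sigma(\chi)\le 0$ for every parabolic $P$, every reduction $\sigma$, and every dominant character $\chi$ of $P$. This step is pure root theory.

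Then I would treat the two nontrivial implications. For (some faithful $\rho$ semistable) $\Rightarrow$ (Ramanathan) I argue by contraposition: if Ramanathan fails there is a maximal parabolic $P$, a reduction $\sigma$ and a dominant $\chi$ with $\deg E_\sigma(\chi)>0$. Decomposing $V$ into weight spaces for the central one-parameter subgroup of the Levi dual to $\chi$ yields a $P$-stable filtration of $V$, which $\sigma$ converts into a filtration of $E(V)$ by subbundles; the $\chi$-weighted sum of the degrees of its graded pieces is a positive multiple of $\deg E_\sigma(\chi)>0$, and a standard slope estimate forces one subbundle to have slope exceeding $\mu(E(V))$, so $E(V)$ is unstable. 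This direction is essentially a weight computation. The reverse implication (Ramanathan) $\Rightarrow$ ($E(V)$ semistable for every faithful $\rho$) is the hard one: assuming some $E(V)$ is unstable, I must manufacture a reduction of $E$ to a parabolic of $G$ violating the numerical condition.

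The main obstacle, and the technical heart of Ramanathan's two papers, is exactly this last construction: a destabilizing subbundle, a priori only a subobject of the $\x{GL}(V)$-bundle $E(V)$, must be upgraded to a reduction of the $G$-bundle $E$ itself. The clean route is through the Harder--Narasimhan filtration of $E(V)$: its uniqueness makes it canonical, hence compatible with direct sums, duals and---crucially, in characteristic $0$, because tensor products of semistable bundles of equal slope are semistable---tensor products, so it defines a tensor-compatible filtration of the fibre functor of the Tannakian category generated by $E$. By Tannakian duality such a filtration is induced by a one-parameter subgroup of $G$, and hence determines a canonical reduction of $E$ to a parabolic $P\subset G$. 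A final degree computation, feeding the slope inequalities of the Harder--Narasimhan pieces back through the dictionary above, shows that this canonical reduction makes $\deg E_\sigma(\chi)>0$ for the relevant dominant $\chi$, contradicting Ramanathan and completing the circle. Establishing the tensor-compatibility of the canonical reduction and its translation into $G$-structure is where the real difficulty lies.
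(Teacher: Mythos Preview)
The paper does not prove this theorem at all: it is stated with the citations \cite{R1}, \cite{R2} (Ramanathan's papers) and immediately followed by the remark that the analogous statement for stability fails. There is therefore no ``paper's own proof'' to compare your proposal against.

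That said, your sketch is a faithful outline of how the result is established in the cited literature. The reformulation of Ramanathan's inequality as $\deg E_\sigma(\chi)\le 0$ for all dominant characters, the weight-filtration argument for the easy direction, and the appeal to canonicity of the Harder--Narasimhan filtration together with its tensor compatibility (in characteristic zero) to produce a parabolic reduction of the $G$-bundle itself for the hard direction, are exactly the ingredients Ramanathan and later Ramanan--Ramanathan use. Your identification of the technical heart---lifting a destabilizing subbundle of $E(V)$ to a reduction of $E$---is correct, and the Tannakian packaging you describe is one of the cleanest ways to express it. So your proposal is not so much a different route as a summary of what lies behind the citation.
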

Note that this is not true if we replace semistable with stable.  \\ 

Consider now the Lie algebra $\ak g:=\x{Lie}(G)$. The adjoint bundle $\x{ad}(E)$ is defined to be  the vector bundle $E(\ak{g})$ associated to the adjoint representation of $G$ on $\ak g$. Since $G$ is semisimple, $\x{ad}(E)$ is a self-dual vector bundle over $X$ of Lie algebras isomorphic to $\ak{g}$. Moreover, the adjoint representation is faithful, hence $E$ is semistable if and only if $\x{ad}(E)$ is semistable vector bundle. \\ 
Following \cite{HM}, we say that $E$ is \emph{ad-stable} if $\x{ad}(E)$ is stable vector bundle. 
\begin{prop}[\cite{HM}]
 If the $G-$bundle $E$ is ad-stable, then it is stable (in the sense of Ramanathan).
\end{prop}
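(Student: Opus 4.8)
The plan is to translate Ramanathan's stability condition into a single slope inequality for a subbundle of $\x{ad}(E)$, the subbundle being produced from the given reduction. Fix a maximal parabolic subgroup $P\subset G$ and a reduction of the structure group $\sigma\colon X\ra E(G/P)$, and write $E_P:=\sigma^*E$ for the corresponding $P$-bundle. Since $E_P(G)\cong E$, we get $E_P(\ak g)\cong\x{ad}(E)$. The $P$-equivariant inclusion of Lie algebras $\ak p\hookrightarrow\ak g$ then induces an inclusion of vector bundles $E_P(\ak p)\hookrightarrow E_P(\ak g)=\x{ad}(E)$; it has rank $\dim\ak p$, so it is a \emph{proper, nonzero} subbundle because $0<\dim\ak p<\dim\ak g$ (as $P\subsetneq G$). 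Hence $E_P(\ak p)$ is an admissible test subbundle for the stability of $\x{ad}(E)$.

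Next I would compute the relevant degrees. Applying the exact associated-bundle functor $E_P(-)$ to the short exact sequence of $P$-modules $0\ra\ak p\ra\ak g\ra\ak g/\ak p\ra 0$ gives a short exact sequence of vector bundles $0\ra E_P(\ak p)\ra\x{ad}(E)\ra E_P(\ak g/\ak p)\ra 0$. Because $G$ is semisimple, $\x{ad}(E)$ is self-dual, hence of degree $0$, so $\x{deg}\,E_P(\ak g/\ak p)=-\x{deg}\,E_P(\ak p)$. On the other hand $T_{E(G/P)}=E(\ak g/\ak p)$, which gives $\sigma^*T_{E(G/P)}\cong E_P(\ak g/\ak p)$, and therefore $\x{deg}(\sigma^*T_{E(G/P)})=-\x{deg}\,E_P(\ak p)$.

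Now the conclusion is immediate. If $\x{ad}(E)$ is stable then, having slope $0$, its proper nonzero subbundle $E_P(\ak p)$ satisfies $\x{deg}\,E_P(\ak p)<0$, whence $\x{deg}(\sigma^*T_{E(G/P)})=-\x{deg}\,E_P(\ak p)>0$. As $P$ and $\sigma$ were arbitrary, $E$ is stable in the sense of Ramanathan.

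So the whole argument reduces to a one-line slope computation once two identifications are in place: $E_P(\ak g)\cong\x{ad}(E)$ (which is exactly the content of $\sigma^*E(G)\cong E$ recalled above) and $\sigma^*T_{E(G/P)}\cong E_P(\ak g/\ak p)$ (which follows from $T_{E(G/P)}=E(\ak g/\ak p)$). Both are standard functorial properties of the associated-bundle construction, so the only genuine obstacle is the bookkeeping needed to see that the subbundle $E_P(\ak p)\subset\x{ad}(E)$ and the quotient $\x{ad}(E)\twoheadrightarrow\sigma^*T_{E(G/P)}$ are induced by one and the same reduction $\sigma$. It is also worth noting that the converse fails — $\x{ad}$-stability is strictly stronger than Ramanathan stability — so there is no symmetric statement to aim for here.
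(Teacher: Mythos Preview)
Your argument is correct. Note, however, that the paper does not give its own proof of this proposition: it is stated as a citation from \cite{HM}, so there is nothing in the paper to compare your argument against directly. That said, the identifications you use --- $E_P(\ak g)\cong\x{ad}(E)$ and $\sigma^*T_{E(G/P)}\cong E_P(\ak g/\ak p)$ --- are exactly the ones the paper invokes a few lines later in the proof that very stable implies stable, so your proof is fully in the spirit of the surrounding text and would slot in without friction.
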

Note also that there are no ad-stable bundles for a non semisimple algebraic group $G$ (see $loc.cit$).\\

\section{Very stability and  finiteness of the Hitchin map}

Let $Q_1,\dots, Q_m$ be a basis of the algebra of invariant polynomials on $\ak{g}$ such that $Q_i$ is homogeneous of  $d_i$. This basis defines a map called the Hitchin morphism \begin{equation}\label{1}\sr H_E:H^0(X,\x{ad}(E)\otimes K_X)\lra W_{G}:=\oplus_{i=1}^m H^0(K_X^{d_i}).\end{equation}

A $G-$Higgs pair is a pair $(E,\phi)$, with $\phi\in H^0(X,\x{ad}(E)\otimes K_X)$. We call $(E,\phi)$ \emph{semistable} if the Higgs bundle $(\x{ad}(E),\phi)$ is semistable. \\ 
Moreover, by \cite{NH}, we have 
\begin{prop}
With the assumption of semi-simplicity of  $G$ we have 	 $$\emph{dim}(H^0(X,\emph{ad}(E)\otimes K_X))=\emph{dim}(W_{G}).$$
\end{prop}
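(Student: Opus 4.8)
The plan is to compute both sides of the asserted equality directly --- the right-hand side by Riemann--Roch on the curve, and the left-hand side by Riemann--Roch together with Serre duality --- and then to reconcile the two answers using the classical numerology of the exponents of $\ak g$.

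First I would handle $W_G=\oplus_{i=1}^m H^0(K_X^{d_i})$. Since $\ak g$ is semisimple we have $\ak g=[\ak g,\ak g]$, so $\ak g^*$ carries no nonzero $G$-invariant linear form, and therefore every invariant degree $d_i$ is at least $2$ (with $d_i=2$ realised by the Killing form). For $d\geqslant 2$ the line bundle $K_X^{1-d}$ has negative degree, so $H^1(K_X^{d})\cong H^0(K_X^{1-d})^\vee=0$ and Riemann--Roch gives $h^0(K_X^{d})=(2d-1)(g-1)$. Summing, $\x{dim}(W_G)=(g-1)\sum_{i=1}^m(2d_i-1)$. I then invoke the standard fact that, with $m=\x{rk}(\ak g)$ and $m_i=d_i-1$ the exponents of $\ak g$, one has $\sum_{i=1}^m m_i=|\Phi^+|=\tfrac12(\x{dim}\,\ak g-m)$; hence $\sum_{i=1}^m(2d_i-1)=2|\Phi^+|+m=\x{dim}\,\ak g$, and so $\x{dim}(W_G)=(g-1)\,\x{dim}\,\ak g$.

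Next I would compute $h^0(X,\x{ad}(E)\otimes K_X)$. As recalled in Section \ref{pre}, $\x{ad}(E)$ is self-dual (via the Killing form) of rank $\x{dim}\,\ak g$, and being self-dual it has degree $0$. Riemann--Roch then yields $\chi(X,\x{ad}(E)\otimes K_X)=\x{dim}\,\ak g\cdot(2g-2)+\x{dim}\,\ak g\cdot(1-g)=(g-1)\,\x{dim}\,\ak g$. By Serre duality and self-duality of $\x{ad}(E)$ we get $H^1(X,\x{ad}(E)\otimes K_X)\cong H^0(X,\x{ad}(E))^\vee$; and $H^0(X,\x{ad}(E))=\x{Lie}\,\x{Aut}(E)$, which vanishes because $E$ is stable and hence $\x{Aut}(E)=Z(G)$ is a finite group. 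Therefore $h^0(X,\x{ad}(E)\otimes K_X)=\chi(X,\x{ad}(E)\otimes K_X)=(g-1)\,\x{dim}\,\ak g=\x{dim}(W_G)$, as claimed.

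The steps above are essentially formal; the one point that genuinely requires an input beyond Riemann--Roch is the vanishing $H^0(X,\x{ad}(E))=0$, which is exactly where stability of $E$ enters (without it the statement fails, e.g.\ for the trivial bundle). The other mild subtlety is the exponent identity $\sum_{i=1}^m(2d_i-1)=\x{dim}\,\ak g$, equivalently $\x{dim}\,\ak g=\x{rk}(\ak g)+2|\Phi^+|$, which one may either quote from the structure theory of semisimple Lie algebras or verify type by type; stating this precisely is the part I would be most careful about.
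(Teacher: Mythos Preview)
The paper does not actually prove this proposition; it simply attributes it to \cite{NH} (Hitchin) without argument, so there is no proof in the paper to compare yours against. Your computation is the standard one and is correct: Riemann--Roch on $K_X^{d_i}$ for $d_i\geqslant 2$ together with the exponent identity $\sum_i(2d_i-1)=\dim\ak g$ gives $\dim W_G=(g-1)\dim\ak g$, and Riemann--Roch plus Serre duality on $\x{ad}(E)\otimes K_X$ gives the same number once $H^0(X,\x{ad}(E))=0$.

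You are right to flag the stability hypothesis. As written, the paper's statement assumes only that $G$ is semisimple and places no condition on $E$, and under those hypotheses alone the assertion is false (the trivial bundle gives $h^0(\x{ad}(E)\otimes K_X)=g\dim\ak g$). What is implicitly being used --- and what Hitchin uses --- is that $E$ is stable (or at least simple), so that $\x{Aut}(E)=Z(G)$ is finite and $H^0(X,\x{ad}(E))=0$. Your proof correctly isolates this as the one non-formal input.
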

Let $\al M_X(G)$ be the moduli space of semistable $G-$Higgs pairs $(E,\phi)$. The Hitchin map gives a fibration $$\sr H:\al M_X(G)\lra W_{G}.$$ %The restriction of $\sr H$ to $H^0(E\otimes E^*\otimes K_X)$ is denoted $\sr H_E$.
By Faltings \cite{F}, $\sr H$ is proper. However, in general $\sr H_E$ is not necessarily proper (take $E$ to be the trivial $G-$bundle for example), and this is due to the fact that  the canonical map $$H^0(X,\x{ad}(E)\otimes K_X)\lra \al M_X(G)$$ is not  proper in general.\\

\begin{defi}
	A $G-$bundle $E$ over $X$ is called \emph{very stable} if the fiber of $\sr{H}_E$ over $0$ is equal to $\{0\}$.
\end{defi} 
Let $\ak{n}\subset \ak{g}$ be the nilpotent cone. Since $\ak{n}$ is $G-$invariant,  it makes sense to consider sections contained  in $\ak{n}$. Such sections are called nilpotent (\cite{BR}).  So the above definition is equivalent to say that the only nilpotent section of $\x{ad}(E)\otimes K_X$ is the zero one. Indeed, let $$\al S(\ak{g}^*)$$ be the symmetric algebra on $\ak{g}^*$. Then the ring  $\al S(\ak{g}^*)^G$ of invariant polynomials on $\ak{g}$ with respect to the adjoint action  is generated by \emph{trace polynomials}: $\ak{g}\ra \bb C$ $x\mapsto \x{Tr}(\x{ad}(x)^k)$ for $x\in \ak{g}$ and $k\geqslant 0$. So  we see that sections in the fiber of $\sr{H}_E$ over $0$ are exactly those being in  the nilpotent cone $\ak n$. For a detailed proof of this fact see \cite[Proposition 16]{Ko}. \\

\begin{prop}
	A very stable $G-$bundle over $X$ is stable. The locus of very stable $G-$bundles is a nonempty open subset of $\al M_X(G)$.
\end{prop}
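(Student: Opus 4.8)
The plan is to treat the three assertions — very stability implies stability, openness of the very stable locus, and its non-emptiness — separately; the first two are elementary, while the last leans on known facts about the Hitchin fibration.

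\textbf{Very stability $\Rightarrow$ stability.} I would argue by contraposition: assuming $E$ is not stable, I produce a nonzero nilpotent Higgs field. By Ramanathan's definition there is a maximal parabolic $P\subset G$ and a reduction $\sigma\colon X\to E(G/P)$, with associated $P$-bundle $E_P=\sigma^*E$ and $E_P(G)\cong E$, such that $\x{deg}\bigl(\sigma^*T_{E(G/P)}\bigr)=\x{deg}\bigl(E_P(\ak g/\ak p)\bigr)\leqslant 0$. Let $\ak n_P\subset\ak p$ be the nilradical; it is an $\x{Ad}(P)$-submodule of $\ak g$ whose elements are nilpotent, so $\ak n_P\subseteq\ak n$ and $E_P(\ak n_P)$ is a subbundle of $\x{ad}(E)$ every section of which, twisted by $K_X$, is a nilpotent section and hence lies in $\sr H_E^{-1}(0)$. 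So it suffices to produce a nonzero element of $H^0(X,E_P(\ak n_P)\otimes K_X)$. The Killing form identifies $\ak n_P$ with $(\ak g/\ak p)^*$ as $P$-modules (since $\ak p^{\perp}=\ak n_P$), whence $\x{deg}\bigl(E_P(\ak n_P)\bigr)=-\x{deg}\bigl(E_P(\ak g/\ak p)\bigr)\geqslant 0$. As $\ak n_P\neq 0$ and $g\geqslant 2$, Riemann–Roch gives
\[
\chi\bigl(E_P(\ak n_P)\otimes K_X\bigr)=\x{deg}\bigl(E_P(\ak n_P)\bigr)+\dim(\ak n_P)\,(g-1)>0,
\]
so $H^0$ is nonzero and $E$ carries a nonzero nilpotent Higgs field, i.e. $E$ is not very stable.

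\textbf{Openness.} This should be formal. Over a variety $S$ carrying a family $\al E\to X\times S$ of stable $G$-bundles, the sheaf $\al H:=p_{S*}\bigl(\x{ad}(\al E)\otimes p_X^*K_X\bigr)$ is locally free (using the cited equality of dimensions and $H^1\bigl(X,\x{ad}(E)\otimes K_X\bigr)=0$ for $E$ stable), and the fibrewise Hitchin maps assemble into an $S$-morphism $\sr H\colon\bb V(\al H)\to\bb A^N_S$ which is fibrewise given by homogeneous polynomials. Its zero fibre $\al N:=\sr H^{-1}(0_S)$ is a closed $S$-subscheme that is a cone in every fibre, and $\al E_s$ is very stable exactly when $\al N_s=\{0\}$, i.e. when the projectivised cone $\bb P(\al N_s)$ is empty. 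Since $\bb P(\al N)$ is a closed subscheme of the projective bundle $\bb P(\al H)$ over $S$, the morphism $\bb P(\al N)\to S$ is proper, its image is closed, and the very stable locus is the open complement. Running this over étale charts (or on $\x{Bun}_G^s$) gives openness in the moduli space of $G$-bundles.

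\textbf{Non-emptiness.} Here I would use a dimension count. Let $\x{Nilp}:=\sr H^{-1}(0)\subset\al M_X(G)$ be the global nilpotent cone and $\x{Nilp}^s$ the part lying over stable bundles; the forgetful map $\pi\colon\x{Nilp}^s\to\x{Bun}_G^s$ is surjective, having the zero section $E\mapsto(E,0)$ as a section, and $\pi^{-1}(E)=\sr H_E^{-1}(0)$, so $E$ is very stable iff $\dim\pi^{-1}(E)=0$. Now $\dim\al M_X(G)=2\dim W_{G}$, while known results on the Hitchin fibration — properness after \cite{F}, together with the isotropy (Lagrangian) property of $\x{Nilp}$, and the equality $\dim H^0(X,\x{ad}(E)\otimes K_X)=\dim W_{G}$ — force $\dim\x{Nilp}=\dim W_{G}=\dim\x{Bun}_G^s$. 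A surjective morphism of varieties of the same dimension has $0$-dimensional generic fibre, so the generic stable $G$-bundle $E$ satisfies $\sr H_E^{-1}(0)=\{0\}$ and is very stable. Combined with Part 1 and the openness above, the very stable locus is a non-empty open subset of the moduli space of $G$-bundles.

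\textbf{Where the difficulty lies.} Parts 1 and 2 are self-contained (the ingredients being Ramanathan's stability, the Killing-form identity $\ak p^{\perp}=\ak n_P$, Riemann–Roch, and $g\geqslant 2$). The one substantive external input is the equality $\dim\x{Nilp}=\dim\x{Bun}_G$ used in Part 3; this is the real obstacle, and I would either invoke the known Lagrangian property of the global nilpotent cone or, for a self-contained treatment, replace the count by an explicit very stable bundle — routine for $\x{SL}_2$ via line subbundles and the theta divisor, but more awkward for general $G$.
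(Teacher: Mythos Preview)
Your proof of the first assertion (very stable $\Rightarrow$ stable) is essentially identical to the paper's: both take a destabilising parabolic reduction $E_P$, use the Killing-form identification $\ak n_P=\ak p^{\perp}\cong(\ak g/\ak p)^*$ to get $\x{deg}\,E_P(\ak n_P)\geqslant 0$, and then Riemann--Roch (with $g\geqslant 2$) to produce a nonzero section of $E_P(\ak n_P)\otimes K_X\subset\x{ad}(E)\otimes K_X$ lying in the nilpotent cone. The paper additionally spells out the $\x{GL}_r$ case separately, but the content is the same.

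For openness and non-emptiness the paper gives no argument of its own, simply deferring to \cite[Proposition~5.2]{BR}. Your projectivisation argument for openness and your dimension count (using $\dim\sr H^{-1}(0)=\dim W_G=\dim\x{Bun}_G^s$ together with upper semicontinuity of fibre dimension and the cone structure of $\sr H_E^{-1}(0)$) are correct and in fact supply more than the paper does. The one caveat, which you already flag, is that the equality $\dim\sr H^{-1}(0)=\dim W_G$ is a substantial external input (Laumon, Ginzburg, Faltings); so your Part~3 is not self-contained, but neither is the paper's bare citation.
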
 
\begin{proof}
	The proof is given in  \cite[Proposition $5.2$]{BR}. However for the completeness, we give a detailed  argument. Note firstly that this is not true over $\bb P^1$ (see \cite{LG}).

	Let's treat the case of vector bundles firstly.  Consider a non stable vector bundle $E$ of degree $0$. So there exist  vector bundles $F$ and $H$ such that $\x{deg}(F)=-\x{deg}(H)\geqslant 0,$  and  an exact sequence $$0\ra F\ra E\ra H\ra 0.$$ 
	Using Riemann-Roch theorem one deduces that $H^0(H^*\otimes F\otimes K_X)\not=0$. Let $\phi'$ be a non zero section in that vector space, then $\phi'$ gives a non trivial map $G\ra F\otimes K_X$. Finally define $\phi$ to be the composition $$E\ra H\ra F\otimes K_X\ra E\otimes K_X. $$ Clearly  $\phi$ is a non trivial nilpotent Higgs field of $E$. 
	
	Assume now that the $G-$bundle $E$ is not stable.   %Note that since $G$ is semisimple,  we have $\x{ad}(E)\cong \x{ad}(E)^*$. Now,  Riemann-Roch formula and Serre duality give  that
	%\begin{align*}
	%	h^0(\x{ad}(E)\otimes K_X) & =h^1(\x{ad}(E)\otimes K_X)+\x{dim}(G)(g-1) \\
	%	&=h^0(\x{ad}(E))+\x{dim}(G)(g-1).
	%\end{align*}
	%Since by assumption $\x{ad}(E)$ is not semistable, $h^0(\x{ad}(E))>0$. But $\x{dim}(W_G)=\x{dim}(G)(g-1)$ (\cite{NH}). Hence $$h^0(\x{ad}(E)\otimes K_X)>\x{dim}(W_G).$$ By dimension argument, $\x{dim}(\sr H_E^{-1}(s))\geqslant h^0(\x{ad}(E))>0$.
	Let ${P}\subset G$ be a maximal parabolic subgroup, put $\ak{p}=\x{Lie}(P)$. Denote by $F=\sigma^*E$  a reduction of the structure group of $E$ with respect to some section $\sigma$. Note that $F(G)=E$ and $F(\ak{g})=\x{ad}(E)$. \\
	Let $\ak g/\ak p$ be the isotropy representation of $P$. Then $\sigma^*T_{E(G/P)}= F(\ak{g}/\ak{p})$ and by assumption $$\x{deg}(F(\ak{g}/\ak{p}))\leqslant 0.$$ Now let $\ak{p^\perp}$ be the orthogonal of $\ak p$ with respect to the Killing form. It is actually the nilpotent radical of $\ak p$. The representation of $P$ on $\ak p^\perp$ is the dual to the isotropy representation, hence the vector bundle $F(\ak{p}^\perp)$ has degree $\geqslant 0$. This implies by Riemann-Roch that $H^0(F(\ak{p}^\perp)\otimes K_X)$ is non zero. But we have an exact sequence $$0\ra \ak{p}\ra \ak{g}\ra \ak{g}/\ak{p}\ra 0.$$ Taking the dual we get the exact sequence $$0\ra \ak{p}^\perp\ra \ak{g}^*\ra \ak{p}^*\ra 0.$$ Since $\ak{g}^*\cong \ak g$, we deduce that  $F(\ak{p}^\perp)\hookrightarrow \x{ad}(E)$.  This implies that $E$ is not very stable.
\end{proof}

This proof implies that if 	 $E$ is  a very stable $G-$bundle, then for any maximal parabolic subgroup $P$ of $G$, and for any reduction of the structure group $\sigma: X\ra E(G/P)$, we have 
$$\x{deg}(\sigma^*T_{E(G/P)})\geqslant r(g-1),$$ where $r=\x{dim}(\ak{g}/\ak{p})=\x{rk}(\sigma^*T_{E(G/P)})$.\\

Recall that the group $\bb C^*$ acts on $H^0(\x{ad}(E)\otimes K_X)$  by $(\lambda,\phi)\mapsto \lambda\phi.$ %This action induces an action on the affine space . \\ 
Consider the weighted action of $\bb C^*$  on $W_{G}$ given by $$\lambda\cdot(s_i)_i= (\lambda^{d_i}s_i),$$ where $s_i\in H^0(X,K_X^{d_i})$. Then the  Hitchin map $\sr{H}_E$ is equivariant with respect to these actions.\\

%\section{Finiteness of the Hitchin map}

Now we come to our main theorem. % It has been proven in \cite{PA} using a geometric argument. We give a basic algebraic proof of a slightly more general result which is actually independent from the geometry of the moduli space of Higgs bundles.

\begin{theo}
  The $G-$bundle $E$ is very stable if and only if $\sr H_E$ is finite.
\end{theo}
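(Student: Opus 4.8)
The plan is to reduce the theorem to Theorem \ref{main2}, which is where all the substance lies.

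First I would rewrite the Hitchin morphism in the shape demanded by Theorem \ref{main2}. Choose a basis $\psi_1,\dots,\psi_n$ of $H^0(X,\x{ad}(E)\otimes K_X)$, identifying this space with $\bb A^n$, and for each $i$ a basis of $H^0(X,K_X^{d_i})$; concatenating the latter and invoking the equality $\x{dim}\,H^0(X,\x{ad}(E)\otimes K_X)=\x{dim}\,W_G$ of \cite{NH}, one identifies $W_G$ with $\bb A^n$ for the \emph{same} $n$. In these coordinates $\sr H_E$ is given by homogeneous polynomials: each $Q_i$ is invariant on $\ak g$ and homogeneous of some positive degree $d_i$, so its full polarization is a symmetric $G$-invariant $d_i$-linear form that makes sense fibrewise on $\x{ad}(E)$; feeding it the section $\phi=\sum_j a_j\psi_j$ in every slot gives $Q_i(\phi)=\sum_{|\alpha|=d_i}a^\alpha c_\alpha$ with $c_\alpha\in H^0(X,K_X^{d_i})$ independent of $\phi$. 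Hence each of the $n$ coordinate functions of $\sr H_E$ is homogeneous in $a_1,\dots,a_n$ — of degree $d_i$ for the coordinates coming from the $i$-th block — so $\sr H_E\colon\bb A^n\to\bb A^n$ is a morphism given by homogeneous polynomials, and in particular $\sr H_E(0)=0$.

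With this in hand both implications are short. If $E$ is very stable then $\sr H_E^{-1}(0)=\{0\}$ by definition, and Theorem \ref{main2} applies verbatim to give that $\sr H_E$ is finite. Conversely, if $\sr H_E$ is finite then it has finite fibres, so $\sr H_E^{-1}(0)$ is a finite set; but the equivariance $\sr H_E(\lambda\phi)=\lambda\cdot\sr H_E(\phi)$ recalled above (the weighted $\bb C^*$-action on $W_G$ fixing $0$) shows that $\sr H_E^{-1}(0)$ is invariant under $\phi\mapsto\lambda\phi$, hence is a cone; a finite cone is $\{0\}$, so $E$ is very stable.

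I expect the main obstacle not to lie in this reduction but in Theorem \ref{main2} itself — roughly, one argues via the Nullstellensatz that $\sr H_E^{-1}(0)=\{0\}$ forces each coordinate function of the source to be integral over the subalgebra generated by the $Q_i(\phi)$, whence module-finiteness. Within the present argument the one point that must be checked carefully is that fibrewise evaluation of the invariant polynomials depends polynomially and homogeneously on $\phi$; the differing block degrees $d_i$ are harmless, since Theorem \ref{main2} only asks each component to be homogeneous (not all of a common degree), this discrepancy being exactly what the weighted $\bb C^*$-action records.
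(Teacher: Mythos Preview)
Your proposal is correct and follows essentially the same route as the paper: the easy direction (finite $\Rightarrow$ very stable) is handled via the $\bb C^*$-invariance of $\sr H_E^{-1}(0)$, and the substantive direction is obtained by recognizing $\sr H_E$, after choosing linear coordinates on source and target, as a map $\bb A^n\to\bb A^n$ by homogeneous polynomials and then invoking Theorem~\ref{main2}. Your explicit verification that the components of $\sr H_E$ are homogeneous (via polarization of the $Q_i$) is a detail the paper leaves implicit, but the structure of the argument is the same.
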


	Note that when $\sr H_E$ is finite then it is quasi-finite. Hence $E$ is very stable because the fiber over $0$ is stable under the $\bb C^*-$action. 
	
	%Now assume that $E$ is very stable. Up to identifications  $H^0(X,\x{End}(E)\otimes K_X)\cong \bb A^n$ and $W\cong\bb  A^n$, we can see the Hitchin map as a morphism $f:\bb A^n\ra \bb A^n$ such that  $f^{-1}(0)=\{0\}$. The fact that $\sr H$ is $\bb C^*-$equivariant implies that  $f$ is given by homogeneous polynomials. Then the result follows from Proposition \ref{2}.
	%	Now, assume that $E$ is very stable. Consider the $\bb C^*-$action on both spaces in (\ref{1}), $\sr{H}$ is of course equivariant with respect to these actions. And consider the map from $W$ to $\mathbb N\cup\{-\infty\}$ given by $$s\lra\x{dim}(\sr{H}^{-1}(s)).  $$  This map is upper semi-continuous (\cite{EGA}, 13.1.3). In particular its restriction to each $\bb C^*-$orbit is again upper semi-continuous.  From this and since $0$ is in the closure of any $\bb C^*-$orbit,   we deduce that the dimension of each fiber is smaller or equal to the dimension of the special fiber over $0$. But $E$ is very stable, thus by definition $\x{dim}(\sr{H}^{-1}(0))=0$. It follows that $\x{dim}(\sr H^{-1}(s))\leqslant 0$ for any $s\in W$. Hence $\sr H$ is quasi-finite. By Lemma \ref{2} bellow, we deduce that $\sr H$ is finite. \\

It is also not difficult to show that $\sr H_E$ is quasi-finite when $E$ is assumed to be very stable. Indeed  assume that $E$ is very stable.  Consider the $\bb C^*-$action on both spaces in (\ref{1}), $\sr{H}_E$ is equivariant with respect to these actions. Consider the map from $W_{G}$ to $\mathbb N\cup\{-\infty\}$ given by $$s\lra\x{dim}(\sr{H}_E^{-1}(s)).  $$  This map is upper semi-continuous (\cite[Th\'eor\`eme 13.1.3]{EGA}). In particular its restriction to each $\bb C^*-$orbit is again upper semi-continuous.  From this and since $0$ is in the closure of any $\bb C^*-$orbit,   we deduce that the dimension of each fiber is smaller or equal to the dimension of the special fiber over $0$. But $E$ is very stable, thus by definition $\x{dim}(\sr{H}_E^{-1}(0))=0$. It follows that $\x{dim}(\sr H_E^{-1}(s))\leqslant 0$ for any $s\in W$. Hence $\sr H$ is quasi-finite. However, it is still not clear why $\sr H_E$ should be finite. \\

The following Theorem is a slight generalization  of our main result.
\begin{theo}\label{2}
	Let $n\geqslant 1$ and $f:\bb A^n\ra \bb A^n$ be a morphism given by homogeneous polynomials such that $f^{-1}(0)=\{0\}$. Then $f$ is finite. In particular, it is proper.
\end{theo}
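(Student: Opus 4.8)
The plan is to show that $f$ is finite by reducing to the algebraic statement that the coordinate ring of the source is a finitely generated module over the coordinate ring of the target, and to extract this from properness plus affineness. Concretely, write $f = (f_1,\dots,f_n)$ with each $f_i \in \bb C[x_1,\dots,x_n]$ homogeneous, say of degree $d_i \geqslant 1$. The hypothesis $f^{-1}(0) = \{0\}$ means that the only common zero of the $f_i$ is the origin. First I would observe that, because the $f_i$ are homogeneous, $f$ is equivariant for the natural $\bb C^*$-action $\lambda \cdot x = \lambda x$ on the source and the weighted action $\lambda \cdot y = (\lambda^{d_i} y_i)_i$ on the target; this lets me pass to weighted projective spaces. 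Indeed the vanishing locus $V(f_1,\dots,f_n) = \{0\}$ says exactly that $f$ induces a morphism $\bb P^{n-1} \to \bb P(d_1,\dots,d_n)$ of the associated projective space to the weighted projective space (no base points, since there is no nonzero common zero).

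The key step is then: a morphism from a projective variety is proper, hence $\bar f \colon \bb P^{n-1} \to \bb P(d_1,\dots,d_n)$ is proper, and combining this with the affine cone picture forces $f$ itself to be proper and quasi-finite, hence finite. More precisely, I would argue as follows. Since $f^{-1}(0) = \{0\}$ and $f$ is $\bb C^*$-equivariant, for any point $y \in \bb A^n$ the fiber $f^{-1}(y)$ is a closed subscheme of $\bb A^n$ on which $\bb C^*$ acts with the only fixed point possibly being $0$, and $0 \in f^{-1}(y)$ only when $y = 0$; a positive-dimensional fiber would, after taking the closure of a $\bb C^*$-orbit inside it, have to contain a $\bb C^*$-fixed point, which would have to be $0$, contradiction unless $y=0$, and $f^{-1}(0)=\{0\}$ is already finite. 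This gives quasi-finiteness directly. For properness, I would use the valuative criterion: given a DVR $R$ with fraction field $K$ and a $K$-point of $\bb A^n$ mapping to an $R$-point of the target, the homogeneity of the $f_i$ together with $f^{-1}(0)=\{0\}$ prevents the source point from escaping to infinity — if some coordinate $x_j$ had a pole, scaling by $\bb C^*$ (i.e. clearing denominators) and reducing mod the maximal ideal would produce a nonzero point in $f^{-1}(0)$, again a contradiction. Properness plus quasi-finiteness plus finite presentation yields finiteness by the standard structure theorem (EGA IV, or: a quasi-finite proper morphism is finite).

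An alternative, perhaps cleaner route I would keep in reserve is purely commutative-algebraic: let $A = \bb C[y_1,\dots,y_n] \to B = \bb C[x_1,\dots,x_n]$ be the ring map with $y_i \mapsto f_i$, and grade $A$ by $\deg y_i = d_i$ and $B$ by $\deg x_j = 1$, so the map is graded. The hypothesis $f^{-1}(0)=\{0\}$ says $\sqrt{(f_1,\dots,f_n)} = (x_1,\dots,x_n) = B_+$, i.e. the irrelevant ideal of $B$ is nilpotent modulo the ideal generated by the images of $A_+$. Then $B/A_+B$ is a finite-dimensional $\bb C$-vector space (it is a graded ring, finitely generated in degree $1$, with $B_+/A_+B$ nilpotent, hence Artinian). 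By the graded Nakayama lemma, lifts of a $\bb C$-basis of $B/A_+B$ generate $B$ as an $A$-module, so $B$ is a finite $A$-module and $f$ is finite.

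The main obstacle I anticipate is making the ``escape to infinity'' argument fully rigorous in the valuative-criterion approach — one has to be careful about how the $\bb C^*$-action interacts with the valuation and in what sense one may ``rescale'' a $K$-point to have integral, non-all-vanishing reduction. For that reason I expect the commutative-algebra argument via the graded Nakayama lemma (the second route above) to be the one worth writing out in full, since there the only real content is the implication ``$\sqrt{(f_1,\dots,f_n)} = B_+$ $\Rightarrow$ $\dim_{\bb C} B/(f_1,\dots,f_n) < \infty$'', which is immediate from the Nullstellensatz together with the observation that a finitely generated graded ring that is a one-point scheme set-theoretically is Artinian.
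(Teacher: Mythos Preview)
Your second route --- the graded Nakayama argument --- is correct and complete as stated: grading $A=\bb C[y_1,\dots,y_n]$ by $\deg y_i=d_i$ makes $y_i\mapsto f_i$ a graded map into $B=\bb C[x_1,\dots,x_n]$; the Nullstellensatz turns $f^{-1}(0)=\{0\}$ into $B_+^N\subset A_+B$, so $B/A_+B$ is a finite-dimensional graded $\bb C$-algebra; and since $B$ is $\bb N$-graded and bounded below, graded Nakayama lifts a basis of $B/A_+B$ to $A$-module generators of $B$. That is a full proof. Your first, geometric, route is (as you yourself note) only a sketch; the valuative-criterion step would need real work, and in any case the algebraic argument makes it unnecessary.

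The paper takes a different, more explicitly constructive path. Rather than invoking Nakayama, it dehomogenizes: for each index $k$ it sets $X_k=1$ to obtain polynomials $P_i^k$ in the variables $T_j=X_j/X_k$, observes that $f^{-1}(0)=\{0\}$ forces these to have no common zero in $\bb A^{n-1}$, so $1=\sum_i U_i^k P_i^k$ for some $U_i^k$, and then rehomogenizes to get $X_k^c=\sum_i \tilde U_i^k P_i$ with $\deg\tilde U_i^k<c$. This degree bound is the point: it lets one show by induction on total degree that the finite set of monomials $\{X^\alpha:\alpha_i<c\ \forall i\}$ generates $B$ over $A=\bb C[P_1,\dots,P_n]$. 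The paper's Remark immediately after the proof even mentions the direct Nullstellensatz route --- $\ak m^N\subset\langle P_i\rangle$ --- but dismisses it on the grounds that one then ``cannot control the degrees of the coefficients''. Your graded-Nakayama argument neatly sidesteps that objection: the degree control is irrelevant once one works with the grading abstractly. So your approach is shorter and more conceptual; the paper's approach is more elementary (no Nakayama) and yields an explicit generating set for $B$ as an $A$-module.
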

\begin{proof}%[Proof of \ref{main2}]
	Denote $f=(P_1,\cdots,P_n)$ and let $X_1,\cdots,X_n$ be the coordinates on $\bb A^n$. Let $d_i=\x{deg}(P_i)\geqslant1$. 
	For  $k=1,\cdots,n,$ and for $i=1,\dots,n$,  consider  the polynomials $$P_i^k=P_i(T_1,\cdots,1,\cdots, T_n),$$ where $1$ is at the $k^{th}$ position and $T_i=X_i/X_k$. Since $f^{-1}(0)=\{0\}$, the polynomials $P_1^k,\dots,P_n^k$ have no common zero in $\bb A^{n-1}$. Hence the ideal generated by them contains $1$.  Let $U_i^k$ be some polynomials in $T_i$'s such that $$\sum_{i=1}^nU_i^kP_i^k=1.$$ Take $c$ to be a positive  integer bigger than  $d_i+\x{deg}(U_i^k)$ for all $i$ and all $k$. Then we see that $$ X_k^c=\sum_{i=1}^n \tilde{U}_i^kP_i,$$ where $\tilde{U}_i^k=X_k^{c-d_i}U_i^k$. In particular, since $c>d_i+\x{deg}(U_i^k)$, we have $$\x{deg}(\tilde{U}_i^k)=c-d_i<c.$$  Let $A=\bb C[P_1,\cdots,P_n]$ and $M=\bb C[X_1\cdots,X_n]$.  For $\alpha=(\alpha_1,\cdots,\alpha_n)\in \bb N^n$, let  $X^\alpha=X_1^{\alpha_1}\cdots X_n^{\alpha_n}$ be the corresponding  monomial in $M$. We claim that the $A-$module $M$ is generated by the finite set  $$S:=\{X^\alpha\; |\,\alpha \in\bb N^n \;\x{ such that } \alpha_i<c\x{ for all }i\}. $$ 
	Let $N$ be the $A-$module generated by $S$. Clearly $N\subset M$. To prove the converse inclusion, we will show, by induction on the total degree $|\alpha|=\sum_i\alpha_i$, that $X^\alpha \in N$ for any $\alpha\in \bb N^n$.\\
	For $\alpha\in \bb N^n$ such that  $|\alpha|< c$, we have by definition $X^\alpha\in S$. Hence the basis of the induction. \\ 
	Now, let $m$ be an integer such that $m\geqslant c$ and assume that for all $\alpha\in \bb N^n$ such that $|\alpha|<m$ we have $X^\alpha\in N$. Let $\alpha\in \bb N^n$ such that $|\alpha|=m$. If for all $i$, $\alpha_i< c$ then $X^\alpha\in S$ and we are done. Otherwise, there exists $k\in \{1,\cdots,n\}$ such that $\alpha_k\geqslant c$. Let $\beta=(\alpha_1,\cdots,\alpha_k-c,\cdots,\alpha_n)$. Using the above decomposition of $X_k^c$ we deduce that $$ X^\alpha=\sum_{i=1}^n X^\beta\tilde{U}_i^kP_i.$$  
	But the polynomials $X^\beta \tilde{U}_i^k$ are homogeneous of degrees $$|\alpha|-c+\x{deg}(\tilde{U}_i^k)< |\alpha|=m.$$ It follows by the induction hypothesis that for all $i$,  $X^\beta\tilde{U}_i^k \in N$. Hence  $X^\alpha\in N$. This ends the proof.
	%using exactly the same argument as in the rank $2$ case above, we deduce that the following set:generates the  space $\bb C[X_1,\cdots,X_n]$ as a $\bb C[P_1,\cdots,P_n]-$module. 	
	%Let $X_i$, for $i=1,\dots,n$, be the coordinates on $\bb A^n$ and $g:=f^\#:\bb C[X_i]\ra \bb C[X_i]$. Then $g$ is given by $n$ polynomials $P_i=g(X_i)$. Since $f^{-1}(0)=\{0\}$, we deduce that $f$ is dominant by a dimension argument, hence $g$ is injective. Also, we have that the ideal $\ak p:=\gen{P_1,\dots,P_n}\subset \ak m:=\gen{X_1,\dots,X_n}$ because $f(0)=0$.  Moreover, the condition $f^{-1}(0)=\{0\}$ translates as $Z(\ak p)=\{0\}$, where for a subset $\ak I\subset \bb C[X_i]$, we denote by  $Z(\ak I)\subset \bb A^n$  the algebraic set of common zeros of elements of $\ak I$. \\%follows: For any prime ideal $\ak p\subset \bb C[X_i]$, $g^{-1}(\ak p)\subset\ak m \Rightarrow \ak p\subset \ak m$. 
	%Now, we want to show that $\bb C[X_i]$ is finitely generated as $\bb C[X_i]-$module under $g$. It is sufficient to show that there exists some integer $N$ such that $\ak m^N\subset \ak p$.  But since by assumption $Z(\ak p)=\{0\}=Z(\ak m)$ we deduce that the radical of $\ak p$ equals $\ak m$, this exactly means $\ak m^N\subset \ak p$ for some positive integer $N$.
	%for any $i$, there exists some positive integer $n_i$ such that $X_i^{n_i}\in \ak I$. By contradiction, assume that such integer doesn't exists, that's $\gen{X_i}\cap \ak I=\{0\}$, but this means that 
\end{proof}
\begin{rema}
		\begin{enumerate}
			\item Using Hilbert zero theorem, we see that actually the condition $f^{-1}(0)=\{0\}$ implies that $\ak{m}^N\subset \gen{P_i}$, where $\ak m=\gen{X_1,\dots, X_n}$. Hence $X_k^N\in \gen{P_i}$ for each $k$. But using this, one can not control the degrees of the coefficients $\tilde{U}_i^k$.
			\item	The condition that  the polynomials $P_i$ are homogeneous  means that $f$ is equivariant with respect to an appropriate $\bb C^*-$actions on the base and target.
		\end{enumerate}
\end{rema}
Since the Hitchin morphism $\sr H_E$ is of finite presentation, then $\sr H_E$ is  finite if and only if it is  proper, hence we deduce the following  
\begin{coro}
The $G-$bundle 	$E$ is very stable if and only if $\sr H_E$ proper. $\hfill\square$
\end{coro}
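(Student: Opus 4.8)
The plan is to derive this Corollary formally from the main theorem, Theorem~\ref{main} (equivalently, from its proof through Theorem~\ref{2}), together with the soft observation that for the Hitchin morphism $\sr H_E$ the notions of finiteness and properness coincide. Granting Theorem~\ref{main}, which says that $E$ is very stable if and only if $\sr H_E$ is finite, it suffices to prove that $\sr H_E$ is finite precisely when it is proper.

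The key point I would use is that $\sr H_E$ is an \emph{affine} morphism. Indeed, its source $H^0(X,\x{ad}(E)\otimes K_X)$ and its target $W_G=\oplus_{i=1}^m H^0(X,K_X^{d_i})$ are finite-dimensional $\bb C$-vector spaces, hence affine varieties (each isomorphic to some $\bb A^n$; one does not even need equality of the two dimensions here), so $\sr H_E$, being a morphism between affine schemes, is affine. Then I would invoke the standard fact that a finite-type morphism of noetherian schemes is finite if and only if it is both affine and proper (see, e.g., \cite{EGA}): finiteness always implies properness, and conversely an affine proper morphism $f$ is finite because properness makes $f_*\al O$ a coherent sheaf of $\al O$-algebras, while affineness identifies $f$ with the relative spectrum of that sheaf. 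Since $\sr H_E$ is automatically affine, this yields ``$\sr H_E$ finite $\iff$ $\sr H_E$ proper''; combining with Theorem~\ref{main} gives ``$E$ very stable $\iff$ $\sr H_E$ proper'', which is the assertion.

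I do not anticipate a genuine obstacle, since the argument is purely formal once Theorem~\ref{main} is in hand; the only point worth flagging is cosmetic, namely that the phrase ``$\sr H_E$ is of finite presentation'' appearing just before the statement should really be read as ``$\sr H_E$ is affine'', which is what actually makes ``finite $=$ proper'' work. If one wishes to avoid citing ``affine $+$ proper $=$ finite'', the direction ``$\sr H_E$ proper $\Rightarrow$ $E$ very stable'' can be argued directly: properness makes the fibre $\sr H_E^{-1}(0)$ complete, while it is also closed in the affine space $H^0(X,\x{ad}(E)\otimes K_X)$, hence it is finite over $\bb C$; but $\sr H_E^{-1}(0)$ is stable under the scaling action $\phi\mapsto\lambda\phi$ (because $\sr H_E$ is $\bb C^*$-equivariant and $0\in W_G$ is a fixed point of the weighted action), and a $0$-dimensional $\bb C^*$-stable closed subset of a vector space containing the origin must equal $\{0\}$, i.e. $E$ is very stable. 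The reverse direction ``$E$ very stable $\Rightarrow$ $\sr H_E$ proper'' is then immediate from Theorem~\ref{2}, since very stability exhibits $\sr H_E$ as a homogeneous map $\bb A^n\to\bb A^n$ with $\sr H_E^{-1}(0)=\{0\}$, which is finite and hence proper.
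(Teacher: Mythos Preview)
Your proposal is correct and follows the same route as the paper: deduce the corollary from Theorem~\ref{main} by arguing that for $\sr H_E$ finiteness and properness coincide. The paper's one-line justification (``since $\sr H_E$ is of finite presentation, finite $\iff$ proper'') is, as you note, not literally the right invocation; your replacement by ``$\sr H_E$ is affine, and affine $+$ proper $=$ finite'' is the clean statement, and your alternative direct argument for ``proper $\Rightarrow$ very stable'' via completeness of the zero fibre and $\bb C^*$-invariance is also fine and in the spirit of the paper's proof of the main theorem.
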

Another consequence is the following
\begin{coro}
	Let $E$ be a very stable $G-$bundle, then $\sr H_E$ is surjective. In particular, for any very stable vector bundles $E$, if $q:\tilde{X}_s\ra X$ is the spectral curve associated to a general spectral data $s\in W_{\x{GL}_r}$, then there exists a line bundle $L$ over $\tilde{X}_s$ such that $q_*L\cong E$.
\end{coro}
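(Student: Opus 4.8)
The plan is to deduce surjectivity of $\sr H_E$ as a purely formal consequence of its finiteness (Theorem \ref{main}, equivalently Theorem \ref{2}) together with the equality of dimensions $\dim H^0(X,\x{ad}(E)\otimes K_X)=\dim W_G$ recalled above (\cite{NH}), and then to read off the statement about spectral curves from the Beauville--Narasimhan--Ramanan correspondence.

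First I would establish surjectivity. Since $E$ is very stable, $f:=\sr H_E\colon H^0(X,\x{ad}(E)\otimes K_X)\to W_G$ is finite; write $n:=\dim W_G=\dim H^0(X,\x{ad}(E)\otimes K_X)$, so that source and target are both affine spaces of dimension $n$. A finite morphism is closed with finite fibres, hence its image $Z:=f\bigl(H^0(X,\x{ad}(E)\otimes K_X)\bigr)$ is a closed irreducible subvariety of $W_G$ and $f$ is a surjective finite morphism onto $Z$; since finite morphisms preserve Krull dimension, $\dim Z=n$, and as $W_G$ is irreducible of dimension $n$ this forces $Z=W_G$. (One may phrase this ring-theoretically: finiteness of $f$ says $\bb C[W_G]\to\mathcal{O}\bigl(H^0(X,\x{ad}(E)\otimes K_X)\bigr)$ is module-finite, its kernel is a prime of the $n$-dimensional polynomial ring $\bb C[W_G]$ with $n$-dimensional quotient, hence $0$, so the extension is integral and injective and surjectivity follows by lying over; this is exactly the observation, implicit in the proof of Theorem \ref{2}, that the components $P_i$ of $f$ turn out to be algebraically independent.) The only point to be careful about is that finiteness must be leveraged to get a \emph{full-dimensional} image and not merely a closed one, which is precisely where the dimension equality \cite{NH} enters.

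Next I would treat the spectral-curve assertion. Let $E$ be a very stable vector bundle of rank $r$ and degree $0$, so that $\sr H_E\colon H^0(X,E\otimes E^*\otimes K_X)\to W_{\x{GL}_r}=\bigoplus_{i=1}^r H^0(X,K_X^i)$; the dimension equality above holds here as well, by Riemann--Roch. For a general $s\in W_{\x{GL}_r}$ the spectral curve $q\colon\tilde X_s\to X$ inside the total space of $K_X$ is smooth and integral and $q$ is finite of degree $r$; the existence of such a dense open locus of $s$ is standard for $g\geqslant 2$. By the surjectivity just proven there is $\phi\in H^0(X,E\otimes E^*\otimes K_X)$ with $\sr H_E(\phi)=s$, i.e.\ $(E,\phi)$ is a Higgs bundle with characteristic polynomial $s$. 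Since $\tilde X_s$ is smooth, the Beauville--Narasimhan--Ramanan correspondence identifies $(E,\phi)$ with a line bundle $L$ on $\tilde X_s$ for which $q_*L\cong E$ (with $\phi$ recovered as the pushforward of multiplication by the tautological section), and this is the desired $L$; its degree is automatically the one compatible with $\deg q_*L=0$.

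I do not expect a genuine obstacle here: the entire content sits in Theorem \ref{2}/Theorem \ref{main}, and the remainder is formal once the dimension count and the spectral correspondence are invoked. The only step needing a word of justification beyond a citation is the non-emptiness of the locus of $s$ with smooth spectral curve, which is classical for $g\geqslant 2$.
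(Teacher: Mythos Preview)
Your argument is correct and follows the same route as the paper: surjectivity is an immediate consequence of finiteness together with the equality of dimensions (a finite morphism between irreducible affine varieties of the same dimension is surjective), and the spectral-curve statement is then read off from the Beauville--Narasimhan--Ramanan correspondence, which is exactly the content of \cite[Proposition~3.6]{BNR} that the paper cites. The paper's own proof is a single line referring to that citation; you have simply unpacked the steps.
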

\begin{proof}
	For the second part, use \cite[Proposition $3.6$]{BNR}.
\end{proof}

Note that the Hitchin map is never injective. To see this let $d$ be the degree of $Q_l$ one of the generators of the ring of invariants. Let $s=(s_1,\cdots,s_m)\in W_G$ be such that $s_l\not=0$ and $s_i=0$ for all $i\not=l$. Then assuming that $E$ is very stable, by the surjectivity of the Hitchin map, there exists a Higgs field $\phi$ such that $\sr H_E(\phi)=s$. But then $\sr{H}_E(\xi\phi)=s$ for any $d^{th}$ root of unity $\xi\in \bb C$. \\

\section{Ad-stable vs very stable $\x{SL}_2-$bundles}
In this section, we study the relation between very stability and ad-stability  of $\x{SL}_2-$bundles over $X$. We will show that there is no implication between these two notions. \\
In the following, we denote by $\al U_X(2,0)$ the moduli space of semistable vector bundle of rank $2$ and degree $0$, and by $\al{SU}_X(2)$ the moduli space of vector bundle with trivial determinant.

Let $E$ be an $\x{SL}_2-$bundle, we denote by $E_v$ the associated vector bundle which has trivial determinant. Note that $\x{ad}(E)=\x{End}_0(E_v)$ the Lie algebra bundle of traceless endomorphisms of $E_v$.  Hence $E$ is ad-stable if and only if $\x{End}_0(E_v)$ is stable. We assume hereafter that $E$ (equivalently $E_v$) is semistable. \\
\begin{lemm}\label{adstable}
	If the $G-$bundle $E$  is ad-stable, then $E_v$ is a stable vector bundle.
\end{lemm}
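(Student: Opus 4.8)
The plan is to prove the contrapositive: if $E_v$ is not stable, then $\x{End}_0(E_v)$ is not stable either, which would contradict ad-stability. So assume $E_v$ is strictly semistable. Since $E_v$ has trivial determinant (hence degree $0$), there is a line subbundle $L\subset E_v$ of degree $0$, giving an exact sequence
\begin{equation*}
0\ra L\ra E_v\ra L^{-1}\ra 0,
\end{equation*}
where I have used $\det E_v\cong\al O_X$ to identify the quotient with $L^{-1}$.

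Next I would produce a destabilizing subsheaf of $\x{End}_0(E_v)$ directly from $L$. The natural candidate is the subsheaf of traceless endomorphisms that preserve $L$; equivalently, consider the maps $\x{End}_0(E_v)\to \x{Hom}(L,E_v/L)=\x{Hom}(L,L^{-1})=L^{-2}$ and $\x{End}_0(E_v)\to\x{Hom}(L,L)=\al O_X$. The kernel $F$ of the first map is the subbundle of endomorphisms sending $L$ into $L$; it sits in an exact sequence $0\ra N\ra F\ra \al O_X\ra 0$ where $N=\x{Hom}(E_v/L,L)\cap\x{End}_0=L^{2}$ is the subbundle of endomorphisms that kill $L$ (and automatically land in $L$). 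Thus $\deg F=\deg L^2+\deg\al O_X=0$, and $F$ has rank $2$ inside the rank-$3$ bundle $\x{End}_0(E_v)$ of degree $0$. Since $0=\mu(F)=\mu(\x{End}_0(E_v))$, the bundle $\x{End}_0(E_v)$ is not stable. I should double-check the rank/degree bookkeeping: $\x{End}_0(E_v)$ has a three-step filtration with graded pieces $L^{2}$, $\al O_X$, $L^{-2}$, all of degree $0$, and $F$ is the sub given by the first two pieces.

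Then I would conclude: $F\subset\x{End}_0(E_v)$ is a proper subbundle of the same slope $0$, so $\x{End}_0(E_v)$ is strictly semistable, hence not stable, so $E$ is not ad-stable. This proves the lemma by contraposition.

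The main obstacle, or rather the point requiring care, is the identification of the associated graded of $\x{End}_0(E_v)$ with respect to the flag induced by $L\subset E_v$, and in particular checking that the endomorphisms killing $L$ are automatically traceless and land in $L$ (so that $L^2$ really is a subbundle of $\x{End}_0$), and that $F$ is saturated so that "not stable" follows cleanly rather than merely "not $\mu$-stable as a sheaf". All of this is elementary linear algebra fibered over $X$, but one must be slightly careful because $L$ may have negative-degree pieces hidden if one chooses the wrong sub-line-bundle; choosing $L$ of degree exactly $0$ (possible precisely because $E_v$ is strictly semistable of degree $0$) is what makes every graded piece have degree $0$.
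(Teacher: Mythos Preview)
Your proof is correct and follows essentially the same strategy as the paper's: both argue by contraposition, take a degree-$0$ line subbundle $L\subset E_v$, and exhibit a slope-$0$ proper subbundle of $\x{End}_0(E_v)$. The only cosmetic difference is that the paper uses your rank-$1$ piece $N=L^2$ (phrased there as a nonzero section of $\x{End}_0(E_v)\otimes L^{-2}$, obtained from the explicit nilpotent composition $E_v\twoheadrightarrow L^{-1}\cong L\otimes L^{-2}\hookrightarrow E_v\otimes L^{-2}$), whereas you go one step further and use the rank-$2$ subbundle $F\supset N$ of traceless endomorphisms preserving $L$; either subbundle destabilizes.
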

\begin{proof}
	We know that $E_v$ is semistable because $E$ is semistable  $G-$bundle (actually it is stable by \cite{HM}). Assume that $E_v$ is not stable. Let $\eta\in \x{Pic}^0(X)$ such that $$0\ra \eta\ra E_v\ra \eta^{-1}\ra 0. $$ Let $\nu : \eta^{-1}\ra \eta\otimes\eta^{-2}$ be the canonical map. Then the composition $$E_v\ra \eta^{-1}\stackrel{\nu}{\lra} \eta\otimes\eta^{-2}\ra E_v\otimes\eta^{-2}$$ defines a non zero map $\phi:E_v\ra E_v\otimes\eta^{-2}$ which is nilpotent, i.e. $\phi^2=0$. Hence $$\x{Tr}(\phi)=0. $$
	So we deduce that $H^0(\x{End}_0(E_v)\otimes\eta^{-2})\not=0$, or, in other words, $\eta^2\hookrightarrow \x{End}_0(E_v)$. Hence $\x{End}_0(E_v)$ is not stable. So $E$ is not ad-stable.
\end{proof}

Let $\eta\in J_X[2]$ be a non trivial $2-$torsion line bundle over $X$. Denote $q:X_\eta\ra X$ the associated \'etale double cover. Note that the map $$q_*:\x{Pic}^0(X_\eta)\lra\al U_X(2,0)$$ is defined everywhere.  We define $\al S_\eta\subset \al U_X(2,0)$ to be the image of $q_*$. %the set of semistable vector bundles on $X$  of the form $q_*L$ for some line bundle $L\in J_{X_\eta}$.
 Then we have 
\begin{prop}
	The $\x{SL}_2-$bundle $E$ is ad-stable if and only if $E_v$ is stable and  does not belong to the set $$\al S:=\bigcup_{\eta\in J_X[2]\sm\{\al O_X\}}\al S_\eta.$$
\end{prop}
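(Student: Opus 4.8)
The plan is to prove both implications by relating the existence of a nonzero nilpotent section of $\x{End}_0(E_v)$ to the presence of a $2$-torsion destabilizing structure. By Lemma \ref{adstable}, if $E$ is ad-stable then $E_v$ is stable, so for the forward direction I would assume $E$ is ad-stable and show $E_v\notin\al S$. Suppose for contradiction that $E_v\cong q_*M$ for some $\eta\in J_X[2]\sm\{\al O_X\}$ and $M\in\x{Pic}^0(X_\eta)$, with $q:X_\eta\ra X$ the étale double cover. The key computation is to describe $\x{End}_0(q_*M)$: by the projection formula and the fact that $q_*\al O_{X_\eta}\cong\al O_X\oplus\eta$, one gets $\x{End}(q_*M)\cong q_*(M^*\otimes q^*q_*M)$, and since $q^*q_*M\cong M\oplus\iota^*M$ (where $\iota$ is the involution), this becomes $q_*\al O_{X_\eta}\oplus q_*(\iota^*M\otimes M^*)$. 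Taking traceless parts, $\eta$ appears as a direct summand (or at least a subsheaf) of $\x{End}_0(q_*M)$; since $\eta$ has degree $0$ and $\x{End}_0(E_v)$ has degree $0$, this destabilizes $\x{End}_0(E_v)$, contradicting ad-stability.

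For the converse, I would assume $E_v$ is stable and $E_v\notin\al S$, and show $\x{End}_0(E_v)$ is stable. Since $\x{End}_0(E_v)$ has rank $3$ and degree $0$ and is self-dual, it is semistable (as $E_v$ is semistable, hence polystable, hence stable here), so if it is not stable there is a destabilizing subbundle, and by self-duality one can arrange a sub-line-bundle $L\subset\x{End}_0(E_v)$ of degree $0$, i.e. a nonzero $L^{-1}\ra\x{End}_0(E_v)$, equivalently a nonzero traceless $\phi:E_v\ra E_v\otimes L^{-1}$ (thinking of $L^{-1}\otimes\x{End}_0(E_v)$ having a section). The structure of such $\phi$ must be analyzed via its characteristic polynomial: since $\phi$ is traceless, $\det\phi\in H^0(L^{-2})$, and either (i) $\det\phi=0$, so $\phi$ is nilpotent and its kernel line bundle forces $E_v$ to be non-stable — contradiction; or (ii) $\det\phi\neq0$, forcing $L^{-2}\cong\al O_X$ so $L^2\cong\al O_X$. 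If $L\cong\al O_X$ then $\phi$ is a nonzero traceless endomorphism of the stable bundle $E_v$, impossible. Hence $L=\eta$ is a nontrivial $2$-torsion line bundle, and the eigenvalue data of $\phi$ on the double cover $X_\eta$ (where $q^*\eta$ is trivial and $\phi$ acquires genuine eigen-line-subbundles) exhibits $E_v$ as a direct image $q_*M$, i.e. $E_v\in\al S_\eta$ — contradiction.

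The main obstacle will be step (ii) of the converse: turning the abstract data of a traceless $\phi:E_v\ra E_v\otimes\eta$ with invertible determinant into an honest identification $E_v\cong q_*M$. Concretely, one pulls back to $X_\eta$, where $q^*\phi:q^*E_v\ra q^*E_v$ is a genuine traceless endomorphism with $(q^*\phi)^2=q^*(\det\phi)\cdot\x{id}$ equal to a nonzero constant times the identity (after trivializing $q^*\eta^2$), so $q^*E_v$ splits into $\pm$-eigenbundles $M\oplus M'$ which are interchanged by the deck involution $\iota$; descent of $M$ together with $\iota^*M\cong M'$ and $\det E_v\cong\al O_X$ then gives $E_v\cong q_*M$ with $M\in\x{Pic}^0(X_\eta)$ (using that $\det q_*M\cong\eta\otimes\x{Nm}(M)$ and adjusting $M$ by a pullback if needed). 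I would also need to check the eigenbundles are line bundles (rank reasons, since $E_v$ has rank $2$ and $\phi\neq 0$ is traceless with nonzero determinant) and that they descend, i.e. that $\iota$ acts compatibly — this is where one must be slightly careful about whether $\phi$ itself, not just $\phi^2$, is $\iota$-anti-invariant, which follows from $\phi$ being a section of $\x{End}_0(E_v)\otimes\eta$ and $\iota^*$ acting by $-1$ on $q^*\eta$-twisted objects in the appropriate sense. The forward direction and the identification of $\eta$ as a summand of $\x{End}_0(q_*M)$ should be routine by the projection formula.
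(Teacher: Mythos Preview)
Your argument is correct and follows the same overall route as the paper: destabilization of $\text{End}_0(E_v)$ (rank $3$, degree $0$, self-dual, semistable) amounts to a degree-zero line subbundle, hence a nonzero traceless $\phi:E_v\to E_v\otimes\eta$ for some $\eta\in J_X$, and one then shows $\eta$ is nontrivial $2$-torsion and concludes $E_v\in\mathcal{S}_\eta$. The paper's execution is slicker than your case split on $\det\phi$: it first notes $\eta\neq\mathcal{O}_X$ (since $H^0(\text{End}_0(E_v))=0$ by stability), so that $H^0(\text{End}_0(E_v)\otimes\eta)=H^0(\text{End}(E_v)\otimes\eta)$ because the trace summand $H^0(\eta)$ vanishes, and then stability of $E_v$ forces any nonzero $E_v\to E_v\otimes\eta$ to be an isomorphism --- taking determinants gives $\eta^2=\mathcal{O}_X$ directly and bypasses your nilpotent case (i) entirely. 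For the final step the paper simply cites Narasimhan--Ramanan for the implication $E_v\cong E_v\otimes\eta\Rightarrow E_v\cong q_*L$, which is precisely the eigenbundle/descent computation you outline by hand on $X_\eta$. Your projection-formula argument for the forward direction is exactly what the paper dismisses as ``straightforward''.
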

\begin{proof}
	If $E_v$ is not stable then $E$ is not ad-stable by Lemma \ref{adstable}.  %it  is of the form $M\oplus M^{-1}$ for some $M\in J_X$. Then clearly $\x{End}_0(E_v)=M^2\oplus M^{-2}\oplus \al O_X$, so $E$ is not ad-stable.
	So assume $E_v$ is stable. Now, $\x{End}_0(E_v)$ is semistable and $H^0(\x{End}_0(E_v))=0$. Since it has rank three, $\x{End}_0(E_v)$ is not stable if and only if there exists a line bundle $\eta\in J_X$ such that $H^0(\x{End}_0(E_v)\otimes \eta)\not=0$. But since $\eta\not=\al O_X$ by the above, we have $$H^0(\x{End}_0(E_v)\otimes \eta)=H^0(\x{End}(E_v)\otimes \eta).$$
So we deduce, from the stability of $E_v$, that these last spaces are non zero if and only if $E_v\cong E_v\otimes \eta$. Hence taking the determinant we get $\eta^2=\al O_X$. From \cite{NR}, we deduce that $E_v\cong q_*L$ for some line bundle $L\in J_{X_\eta}$,  hence $E_v\in \al S_\eta$. \\
The converse is straightforward.
\end{proof}
 
Now let $\kappa$ be a theta characteristic and denote by  $\Theta_\kappa\subset \al{SU}_X(2,0)$  the  associated theta divisor given by $$\Theta_{\kappa} \coloneqq\{E\in \al{SU}_X(2)\mid h^0(E\otimes \kappa)\not=0\}.$$ Then we have 
\begin{prop}
%Let $\kappa$ be any theta characteristic on $X$, then t
The divisor  $\Theta_{\kappa}$  is included in the complement of the locus of very stable vector bundles.
\end{prop}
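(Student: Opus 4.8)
The plan is to produce, for any $E_v \in \Theta_\kappa$, a nonzero nilpotent Higgs field on $E_v$, which by definition shows $E_v$ is not very stable. The starting point is a nonzero section $s \in H^0(X, E_v \otimes \kappa)$, whose existence is exactly the condition defining $\Theta_\kappa$; equivalently, thinking of $\kappa$ as a square root of $K_X$, this is a nonzero bundle map $\kappa^{-1} \to E_v$, i.e. a nonzero map $\al O_X \to E_v \otimes \kappa$. I want to combine such a section with a suitable ``co-section'' to build a rank-one (hence nilpotent, since $\phi^2$ visibly lands in a rank $\leq 1$ sub of a rank $2$ bundle and trace-zero forces nilpotency) endomorphism twisted by $K_X$.

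The key construction: from $s\colon \kappa^{-1}\hookrightarrow E_v$ we get a line subbundle (after saturating) $L \subset E_v$ with $\deg L \geqslant \deg \kappa^{-1} = -(g-1)$, and since $E_v$ is semistable of degree $0$ we have $\deg L \leqslant 0$, so $L$ has degree between $-(g-1)$ and $0$. Dually, using the symplectic (trivial-determinant) structure $E_v \cong E_v^*$, the inclusion $L \hookrightarrow E_v \cong E_v^*$ transposes to a surjection $E_v \twoheadrightarrow L^*$ with $\deg L^* \geqslant 0$. Now I form the composition
\[
\phi\colon E_v \twoheadrightarrow L^* \stackrel{\mu}{\lra} L\otimes K_X \hookrightarrow E_v\otimes K_X,
\]
where $\mu$ is a nonzero section of $\operatorname{Hom}(L^*, L\otimes K_X) = L^{2}\otimes K_X$; such a section exists because $\deg(L^2\otimes K_X) = 2\deg L + 2(g-1) \geqslant 0$ and in fact one can take $\mu$ to come directly from $s$ itself — the section $s$ of $E_v\otimes \kappa$ together with the symplectic form gives an element of $H^0(E_v^*\otimes\kappa) = H^0(E_v\otimes\kappa)$, and their ``product'' is a section of $E_v\otimes E_v\otimes K_X$ which, paired through $\operatorname{Sym}^2$ versus $\wedge^2$, produces a traceless twisted endomorphism. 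The image of $\phi$ is contained in the line subbundle $L\otimes K_X$, so $\operatorname{rk}(\phi)\leqslant 1$; being traceless, $\phi$ is nilpotent. It remains to check $\phi \neq 0$, which follows because $s$ is nonzero and the symplectic pairing is nondegenerate.

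The step I expect to be the main obstacle is verifying that $\phi$ is genuinely nonzero and correctly traceless — i.e. pinning down the precise identification $E_v\cong E_v^*$ compatible with $\wedge^2 E_v\cong\al O_X$ and checking that the natural pairing of $s$ with its symplectic dual does not accidentally vanish or land in the symmetric (non-traceless) part. Concretely, writing $s$ locally as a section and $\omega$ for the symplectic form, $\phi = $ ``$s\otimes(\omega^{-1}s)$ antisymmetrized''; one must confirm the antisymmetrization is not identically zero, which amounts to the observation that a rank-one operator $v\mapsto \omega(u,v)\,u$ is traceless (since $\omega(u,u)=0$) and nonzero whenever $u\neq 0$. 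Once this local computation is recorded, globalizing over $X$ is automatic because all the bundles and maps involved are defined globally. I would therefore organize the proof as: (1) recall $s\in H^0(E_v\otimes\kappa)\smallsetminus\{0\}$; (2) use the symplectic form to define $\phi\in H^0(\operatorname{End}_0(E_v)\otimes K_X)$ by the rank-one recipe above; (3) check $\phi\neq 0$ and $\phi^2=0$ pointwise; (4) conclude $E_v$ carries a nonzero nilpotent Higgs field, hence is not very stable, and since $\Theta_\kappa$ was an arbitrary point-set this gives the claimed inclusion.
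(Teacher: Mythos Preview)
Your approach is essentially the paper's: from a nonzero $s\in H^0(E_v\otimes\kappa)$ you build a rank-one (hence traceless and nilpotent) Higgs field by composing the quotient $E_v\twoheadrightarrow L^*$, a section $\mu\in H^0(L^2\otimes K_X)$, and the inclusion $L\otimes K_X\hookrightarrow E_v\otimes K_X$. The paper phrases this with $L^{-1}$ as the saturated image of $\kappa^{-1}$ and takes $\mu$ to be the square of the induced nonzero section of $L\otimes\kappa$.

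One point to tighten: the inequality $\deg(L^2\otimes K_X)\geqslant 0$ alone does \emph{not} guarantee a nonzero section (a generic degree-$0$ line bundle has none), so that sentence should be dropped. Your own fix is the right one and is exactly what the paper does: since $L$ is the saturation of the image of $s\colon\kappa^{-1}\to E_v$, the map $s$ factors through a nonzero $s'\in H^0(L\otimes\kappa)$, and then $\mu\coloneqq (s')^{\otimes 2}\in H^0(L^2\otimes K_X)$ is nonzero. With this choice the composite $\phi$ is precisely your symplectic ``$v\mapsto\omega(s,v)\,s$'' written in coordinates, so the local check you outline (traceless because $\omega(s,s)=0$, nonzero wherever $s$ is) goes through without further work. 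The detour through $\x{Sym}^2$ versus $\wedge^2$ is correct but unnecessary once you have $\mu=(s')^{\otimes2}$ explicitly.
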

\begin{proof}
	 For  $E\in \Theta_{\kappa}$,  we have an exact sequence  $$0\ra L^{-1}\ra E\ra L\ra 0.,$$ where $L^{-1}$ is the image of $\kappa^{-1}$ in $E$. Since $H^0(X,L^{-1}\otimes\kappa)\not=0$, we deduce that $H^0(X, L^{-2}\otimes K_X)\not=0$. Let $s$ be a non zero global section of $L^{-2}\otimes K_X$. Then the composition $$E\twoheadrightarrow \kappa\stackrel{s}{\lra} \kappa^{-1}\otimes K_X\hookrightarrow E\otimes K_X$$ is a non trivial Higgs field which is clearly nilpotent. This shows  that $E$ is not very stable. 
	 %Conversely, let $E$ be a non very stable vector bundle, write $E=E_0\otimes L$ with $\x{det}(E_0)=\al O_X$. Then   that $\Theta_\kappa\otimes J_X$. Hence $\mathring{\sr{V}}$ is of codimension $1$ in $\sr{SU}_X(r)$. Denote by $\tilde{\Theta}_\kappa:=\Theta_\kappa\otimes \x{Pic}^0(X)$. It is clear that $E\otimes L$ is in $\sr{V}^c$ for any $E$  in $\mathring{\sr V}^c$, hence $\tilde{\Theta}_\kappa\subset \sr{V}^c$. Since $E\otimes L\cong F\otimes M$ implies that $E\cong F\otimes \eta$ and $L\cong M\otimes \eta$ for some $\eta \in \x{Pic}^0(X)[2]$, one deduces that $\x{dim}(\tilde{\Theta}_\kappa)=\x{dim}(\Theta_\kappa)+g_X$. Hence the result.
\end{proof}
	   It was pointed to me by Pauly that this result has been already known, see \cite[and references therein]{PP}.
\begin{coro} There exists an ad-stable $\x{SL}_2-$bundle which is not very stable.
\end{coro}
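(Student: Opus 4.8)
The final statement is the corollary asserting the existence of an ad-stable $\x{SL}_2$-bundle which is not very stable. The plan is to combine the two propositions just proved: the one characterizing ad-stable $\x{SL}_2$-bundles as those $E$ with $E_v$ stable and $E_v\notin \al S$, and the one showing that every $E_v\in \Theta_\kappa$ fails to be very stable. So it suffices to exhibit a single stable bundle $E_v$ with trivial determinant lying on some theta divisor $\Theta_\kappa$ but avoiding the union $\al S$ of the Prym loci $\al S_\eta$. A dimension count will do the job.

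First I would recall the relevant dimensions: the moduli space $\al{SU}_X(2)$ has dimension $3g-3$, the theta divisor $\Theta_\kappa$ has dimension $3g-4$, each $\al S_\eta$ is the image of $\x{Pic}^0(X_\eta)$, which is a $g$-dimensional abelian variety (here I use that $X_\eta\to X$ is étale of degree $2$, so $X_\eta$ has genus $2g-1$, but $q_*$ factors through the Prym, hence the image in $\al{SU}_X(2)$ has dimension equal to $\dim \x{Prym} = g-1$; in any case the image has dimension at most $g$). Since $J_X[2]$ is finite (order $2^{2g}$), the whole set $\al S$ has dimension at most $g$. For $g\geqslant 2$ we have $3g-4 > g$ precisely when $2g>4$, i.e. $g\geqslant 3$; one checks $g=2$ separately (there $3g-4=2$ and $\dim\al S\leqslant g-1=1$, so the count still works, or one invokes that in genus $2$ the generic bundle on $\Theta_\kappa$ is stable and not in $\al S$ directly). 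Either way, $\dim\Theta_\kappa > \dim(\Theta_\kappa\cap\al S)$, so $\Theta_\kappa\not\subset\al S$.

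Next I would note that the stable locus in $\al{SU}_X(2)$ is a dense open subset, and its complement (the semistable-but-not-stable locus, i.e. the Kummer variety) has dimension $g < 3g-4$ for $g\geqslant 3$ (and equals $g=2=3g-4$ for $g=2$, where one must be slightly more careful but the generic point of $\Theta_\kappa$ is still stable). Hence a general point $E_v$ of $\Theta_\kappa$ is stable, lies on $\Theta_\kappa$, and does not lie in $\al S$. By the proposition characterizing ad-stability, the corresponding $\x{SL}_2$-bundle $E$ is ad-stable; by the proposition on $\Theta_\kappa$, it is not very stable. This produces the desired example.

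The only real subtlety — the main obstacle — is making the dimension bookkeeping airtight, in particular verifying that $\al S$ (and the non-stable locus) genuinely has dimension strictly less than $3g-4$ for every $g\geqslant 2$, including the borderline genus $2$ case, and confirming that $\Theta_\kappa$ is irreducible (or at least that no irreducible component of it is contained in $\al S$) so that ``a general point'' makes sense. This is standard: $\Theta_\kappa$ is an irreducible divisor on $\al{SU}_X(2)\cong\mathbb{P}^{2g-1}$ cut out by a section of the determinant line bundle, and each $\al S_\eta$ has dimension $g-1$, strictly less than $3g-4$ for $g\geqslant 2$ since $3g-4-(g-1)=2g-3\geqslant 1$. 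So in fact the count is clean in all genera $g\geqslant 2$, and no separate argument is needed. I would present it in that streamlined form.
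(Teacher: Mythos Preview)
Your approach matches the paper's: compare $\dim\Theta_\kappa=3g-4$ against $\dim\al S\leqslant g-1$ and conclude that $\Theta_\kappa\not\subset\al S$, so some point of $\Theta_\kappa$ yields an ad-stable but non--very-stable bundle. The paper reaches the same bound by a slightly different route, describing $\al S_\eta\cap\al{SU}_X(2)$ as the image of a fiber of the norm map $\x{Nm}:\x{Pic}^0(X_\eta)\to\x{Pic}^0(X)$ (which has dimension $g-1$), and it leaves the stability of the chosen point implicit, whereas you check it explicitly. One small correction: the identification $\al{SU}_X(2)\cong\bb P^{2g-1}$ holds only for $g=2$; for $g\geqslant3$ the theta map is not an isomorphism. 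This slip is harmless, however, since your argument does not actually require irreducibility of $\Theta_\kappa$---the pure dimension comparison already forces $\Theta_\kappa\setminus(\al S\cup\text{Kummer})\neq\emptyset$.
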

 \begin{proof}
 Since the locus $\Theta_{\kappa}\subset\al{SU}_X(2)$ is a divisor,  so its dimension is $3g-4$, but we see that $\al S\cap \Theta_\kappa$ inside $\al U_X(2)$ has dimension at most $g-1$. Indeed, consider the double cover  $q:X_\eta\ra X$  associated to some $\eta$.  Then the vector bundle $q_*L$ has trivial determinant if and only if $\x{Nm}(L)=\eta$, where $\x{Nm}:\x{Pic}(X_\eta)\ra \x{Pic}(X)$ is the norm map. Hence $\al S\cap\Theta_\kappa$  is a finite union (over $\eta$)  of the direct image by $q_*$ of the intersection of $\x{Nm}^{-1}(\eta)$ and the Riemann theta divisor in $\tilde{\Theta}_{q^*\kappa}\subset \x{Pic}^0(X_\eta)$.  But $\x{dim}(\x{Nm}^{-1}(\eta))=g-1$, hence the claim.\\ Moreover, since $g\geqslant2$, we have $g-1<3g-4$.  In particular, there exists a vector bundle $E\in\Theta_\kappa\sm\al S$.  So $E$ is ad-stable, but not very stable.%it intersects any positive dimensional subset of $\al{SU}_X(2)$. Hence the intersection 
 \end{proof}
Conversely, we show the existence of a very stable  $SL_2-$bundle which is not ad-stable.
\begin{prop}
	There exists a very stable $\x{SL}_2-$bundle which is not ad-stable.
\end{prop}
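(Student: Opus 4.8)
The plan is to exhibit such a bundle concretely by working with an $\mathrm{SL}_2$-bundle $E$ whose associated vector bundle $E_v$ is stable but lies in the locus $\al S = \bigcup_{\eta} \al S_\eta$, so that $E$ is \emph{not} ad-stable (by the Proposition characterizing ad-stability), and then showing that for a generic such choice $E$ \emph{is} very stable. Fix a nontrivial $2$-torsion line bundle $\eta \in J_X[2]$ with associated étale double cover $q\colon X_\eta \to X$, and consider $E_v = q_*L$ for $L \in \mathrm{Pic}(X_\eta)$ with $\mathrm{Nm}(L) = \eta$ (this is the condition ensuring trivial determinant, as used in the last corollary). By \cite{NR} such $q_*L$ are exactly the elements of $\al{SU}_X(2) \cap \al S_\eta$, and for $L$ outside the finitely many fixed points of the involution they are stable. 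So the whole family $\{q_*L : \mathrm{Nm}(L)=\eta\}$ consists of ad-unstable points of $\al{SU}_X(2)$; it has dimension $g-1$.

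The key step is then: show that a \emph{general} member of this family is very stable. Equivalently, by the definition of very stability, I must show that for general $L$ the bundle $\x{End}_0(q_*L)$ carries no nonzero nilpotent Higgs field, i.e. $\x{ad}(E)\otimes K_X$ has no nonzero section landing in the nilpotent cone. Since the very stable locus is open and nonempty in $\al{SU}_X(2)$ (by the Proposition above), it suffices to show that the $(g-1)$-dimensional family $\al S_\eta \cap \al{SU}_X(2)$ is not entirely contained in the complement of the very stable locus. I would do this by a dimension count against the ``bad'' locus: the non-very-stable locus in $\al{SU}_X(2)$ is a proper closed subset, and one can try to bound its intersection with $\al S_\eta \cap \al{SU}_X(2)$. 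A cleaner route is to produce one explicit $L$ with $q_*L$ very stable — for instance, choose $L$ so that $q_*L$ avoids every theta divisor $\Theta_\kappa$ and, more strongly, so that a direct analysis of nilpotent Higgs fields $\phi\colon q_*L \to q_*L \otimes K_X$ forces $\phi = 0$. Concretely, a nonzero nilpotent $\phi$ of an $\mathrm{SL}_2$-bundle has image a line subbundle $M^{-1} \subset q_*L\otimes K_X$ with $M^{-1} \hookrightarrow q_*L\otimes K_X$ and $M^{-1}\otimes K_X^{-1}=M\hookrightarrow \ker\phi \subset q_*L$, giving $M \hookrightarrow q_*L$ and $M^{-1} \hookrightarrow q_*L \otimes K_X$; by the projection formula, $\mathrm{Hom}(M, q_*L) = H^0(X_\eta, q^*M^{-1}\otimes L)$ and similarly for the other inclusion, so the existence of $\phi\neq 0$ imposes that both $q^*M^{-1}\otimes L$ and $q^*M\otimes L \otimes q^*K_X$ have sections for some $M$ of degree $\le 0$ on $X$. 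For generic $L$ in the $(g-1)$-dimensional family $\mathrm{Nm}^{-1}(\eta)$, this is a codimension condition that fails, which gives the claim.

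I would organize the write-up as: (i) recall from the preceding Proposition that $q_*L$ with $\mathrm{Nm}(L)=\eta$ and $L$ not fixed by the involution is a stable, ad-\emph{unstable} $\mathrm{SL}_2$-bundle; (ii) translate ``$q_*L$ is not very stable'' into the existence of a line bundle $M$ on $X$ with $\deg M \le 0$ and $H^0(X_\eta, q^*M^{-1}\otimes L)\neq 0$ and $H^0(X_\eta, q^*M\otimes q^*K_X \otimes L)\neq 0$ (via the nilpotent Higgs field analysis above and the projection formula); (iii) count dimensions: as $M$ ranges over line bundles of bounded (non-positive) degree and $L$ over $\mathrm{Nm}^{-1}(\eta)\cong \ker(\mathrm{Nm})$ of dimension $g-1$, the locus of $L$ admitting such an $M$ is a countable union of proper subvarieties, hence its complement in $\mathrm{Nm}^{-1}(\eta)$ is nonempty; (iv) conclude that such an $L$ gives $E = $ the $\mathrm{SL}_2$-bundle with $E_v = q_*L$ which is very stable but not ad-stable.

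The main obstacle will be step (iii): controlling the dimension of the locus of $(M,L)$ for which both cohomology groups jump, since $\deg M$ can in principle be very negative (though boundedness of nilpotent Higgs subsheaves of a \emph{semistable} degree-$0$ bundle pins $\deg M$ to a finite range, $0 \ge \deg M \ge -(g-1)$ or so, which must be argued carefully using semistability of $q_*L$ and of $\x{End}_0(q_*L)$). One must make sure the ``bad'' locus genuinely has positive codimension in the $(g-1)$-dimensional family and does not accidentally cover it; the étale double cover structure and the fact that $q^*\eta = \al O_{X_\eta}$ should be exploited to make the relevant Brill--Noether-type estimates on $X_\eta$ explicit.
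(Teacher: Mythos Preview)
Your overall strategy --- take $E_v=q_*L$ on the Prym-type locus and show that for generic $L$ the bundle is very stable --- is exactly the paper's. The gap is in step~(ii): your cohomological translation of ``$q_*L$ not very stable'' is wrong, and with the conditions you wrote step~(iii) cannot succeed.

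Concretely, the line ``$M^{-1}\otimes K_X^{-1}=M$'' is false: the image of a nilpotent $\phi$ is only \emph{contained in} $(\ker\phi)\otimes K_X$, it need not equal it. If you set $M=\ker\phi$ (so $\deg M\le 0$ by semistability), then $\phi$ factors as
\[
E_v\twoheadrightarrow M^{-1}\xrightarrow{\ \nu\ } M\otimes K_X\hookrightarrow E_v\otimes K_X,
\]
and the essential constraint is that $\nu\ne 0$, i.e.\ $H^0(X,K_X\otimes M^2)\ne 0$. This is the condition you dropped. Your condition~(B), namely $H^0(X_\eta,L\otimes q^*M\otimes q^*K_X)\ne 0$, is instead $\mathrm{Hom}(M^{-1},E_v\otimes K_X)\ne 0$; by Serre duality and $E_v^\ast\cong E_v$ this is $H^1(E_v\otimes M^{-1})\ne 0$, which for $-(g-1)\le\deg M\le 0$ is automatic once~(A) holds (since $\chi(E_v\otimes M^{-1})\le 0$). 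So (A)$+$(B) collapses to~(A) alone, and~(A) holds for \emph{every} $L$: any translate of the $g$-dimensional subvariety $q^*\mathrm{Pic}^0(X)$ meets the ample theta divisor of $X_\eta$, so one can always find $M$ of degree $-(g-1)$ with $H^0(L\otimes q^*M^{-1})\ne 0$. Hence your ``bad locus'' is all of $\mathrm{Nm}^{-1}(\eta)$ and the count in~(iii) fails.

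The fix is precisely what the paper does: impose instead (a)~$H^0(X_\eta,L\otimes q^*N)\ne 0$ together with (b)~$H^0(X,K_X\otimes N^{-2})\ne 0$ for some $N\in\mathrm{Pic}^d(X)$, $0\le d\le g-1$. Condition~(b) forces $N$ into the locus $\Theta_d\subset\mathrm{Pic}^d(X)$, whose dimension is at most $\max(2g-2-2d,\,g)$; combining with $\dim\Xi_{2d}=2d$ for the effective locus on $X_\eta$, the map $\Psi_d\colon\Xi_{2d}\times\Theta_d\to\mathrm{Pic}^0(X_\eta)$, $(M',N)\mapsto M'\otimes q^*N^{-1}$, has image of dimension at most $2g-2<2g-1$ for every $d$, so a generic $L$ avoids all of them. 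That is the dimension saving you need, and it comes entirely from~(b).
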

 \begin{proof} Let  $E$ be a non very stable vector bundle in $\al{SU}_X(2)$, and let $\phi$ be a non zero nilpotent Higgs field. If $L^{-1}=\x{ker}(\phi)$ then we get the following diagram $$\xymatrix{0 \ar[r] &  L^{-1}\ar[r] &  E\ar[r]^p &  L\ar[r]\ar[dll]_\nu  &0  \\ 0\ar[r] & L^{-1}\otimes K_X\ar[r]^i&  E\otimes K_X\ar[r] &  L\otimes K_X\ar[r]& 0}$$
and $\phi=i\circ \nu\circ p$, where $\nu$ is a non zero global section of $L^{-2}\otimes K_X$. This implies that $\x{deg}(L)\leqslant  g-1$ and $\x{deg}(L)\geqslant0$ since $E$ is semistable.\\
 
 	Let $\eta\in J_X[2]$ non trivial, and let $q:X_\eta\ra X$ the associated unramified double cover. 
     For $d=0,\dots,g-1$, let $\Theta_d\subset \x{Pic}^d(X)$ (resp. $\Xi_{2d}\subset \x{Pic}^{2d}(X_\eta)$) be the locus of line bundles $L$ such that $K_X\otimes L^{-2}$ (resp. $L$) has a non zero global section. The locus $\Xi_{2d}$ is called the Brill-Noether locus in $\x{Pic}(X_\eta)$ of degree $2d$ and its dimension  is $2d$ (for $d\leqslant g-1$). While the quotient of the locus $\Theta_d$  by $\x{Pic}^0(X)[2]$ is isomorphic to the Brill-Noether locus in $\x{Pic}(X)$  of degree $2g-2d-2$, hence its dimension  is  given by 
     $$\x{dim}(\Theta_d)= \begin{cases}  2g-2d-2 & \x{if } 2d\geqslant g-1 \\ g & \x{if } 2d \leqslant g-2.
     \end{cases}$$ 
     Now, consider the map $$\Psi_d:\Xi_{2d}\times \Theta_d\lra \x{Pic}^0(X_\eta)$$ that associates to $(M,N)$ the line bundle $M\otimes q^*N^{-1}$. These maps are never surjective. Indeed, if $2d\leqslant g-2$, we have $$\x{dim}(\x{Im}(\Psi_d))\leqslant \x{dim}(\Xi_{2d}\times \Theta_d)=2d+g<2g-1.$$ 	Otherwise $ 2d\geqslant g-1$, and in this case we also have $$\x{dim}(\x{Im}(\Psi_d))\leqslant \x{dim}(\Xi_{2d}\times \Theta_d)=2d+2g-2d-2=2g-2<2g-1.$$ 
     
     %we see that the fibers of $\Psi_d$ are all of dimension $d$ (they are isomorphic to $q^*\x{Pic}^d(X)\cap \Xi_{2d}$ which is isomorphic to the effective locus in $\x{Pic}^d(X)$).  Hence $$\x{dim}(\x{Im}(\Psi_d))\leqslant \x{dim}(\Xi_{2d}\times \x{Pic}^d(X))-d=d+g<2g-1=\x{dim}(\x{Pic}^0(X_\eta)).$$ For $d=g-1$, let $T$ ne the finite set of theta characteristics on $X$, then again  the map $$\Psi_{g-1}:T\times \Xi_{2g-2}\lra \x{Pic}^0(X_\eta)$$ is not surjective. \\
      %
 	%consider the map  $$\xi: \Xi_d{g-1}\lra \x{Pic}^0(X_\eta),$$ defined by $$L\lra \sigma^*L\otimes L^{-1},$$ where $\sigma$ is the involution on $X_\eta$ that interchanges the two sheets over $X$. Then the image of $xi$ is one connected component of $\x{Nm}^{-1}(\al O_X)$. So let $L$ be a line bundle such that $\sigma^*L\otimes L^{-1}$ belongs to the other connected component. Then one can checks easily that $L$ does not belongs to the image  % 
 	 Moreover, the union of the images of $\Psi_d$, for $d=0,\dots,g-1$,  is certainly not the whole of  $\x{Pic}^0(X_\eta)$. Let $L\in \x{Pic}^0(X_\eta)$ not in the union of the  images of $\Psi_d$ and such that $E'=q_*L$ is stable (note that such line bundle exists  by \cite[Proposition $6.2$]{Z}). Now let $E=E'\otimes \delta^{-1}=q_*(L\otimes q^*\delta^{-1})$ where $\delta$ is a line bundle such that $\delta^2=\x{det}(E')$.  Then $E$ is not ad-stable because $E\in \al S$, and by the remark in the beginning of this proof, it is very stable because it has no  line subbundle $L$ of degree $-0,\dots,-(g-1)$ such that $K_XL^{-2}$ has non zero global section. This ends the proof.
 	 %By Hurwitz formula, we have $K_{X_\eta}=q^*K_X$, it follows that the pullback of any theta characteristic on $X$ to $X_\eta$ is again a theta  characteristic. Let $\kappa$ be a theta characteristic on $X$ and define  $$\tilde{\Theta}_\kappa=\{L\in J_{X_\eta}\,| h^0(L\otimes q^*\kappa)>0\}.$$
 	%Of course, $\tilde{\Theta}_{\kappa}$ is the translate of the Riemann theta divisor on $X_\eta$ by $q^*\kappa$. Let $\tilde{\Theta}$ be the union of the support of these divisors over $\kappa$. Then $\tilde{\Theta}$  it is codimension one in $J_{X_\eta}$. Now,, the locus $\al T$ of degree $0$ line bundles on $X_\eta$ such that  $q_*L$ is stable vector bundle denses in $J_{X_\eta}$. So consider $L\in\al T\sm \tilde{Theta}$ and put $E=q_*L$. the vector bundle $E$ is stable and it is not ad-stable since it belongs to $\al S$ by definition. But it is very stable
 \end{proof}
%\section{Codimension of the locus of non very stable vector bundles}
%In \cite{LG}, it was shown that the locus of very stable vector bundles in the moduli space $\sr U_ X(r,0)$ is dense. Hence its complimentary is of codimension at least $1$. In this section we study this codimension.  In fact we will prove that this locus is of codimension $1$. \\
%We denote by $\sr{SU}_X(r)$  the moduli space of semistable rank $r$ vector bundles with trivial determinant, and by $\sr V\subset \sr U_X(r,0)$ (resp. $\mathring{\sr V}\subset\sr{SU}_X(r)$) the locus of very stable bundles and by $\sr{V}^c$ and $\mathring{\sr V}^c$ their complementaries.

%The aim of this section is to prove the following theorem.
%\begin{theo}\label{main2}
%	The loci $\sr V^c$ and $\mathring{\sr V}^c$ have codimension $1$ in $\sr{U}_X(r,0)$ and $\sr{SU}_X(r)$ respectively.
%\end{theo}

\vspace{1cm}

\bibliographystyle{alpha}
\bibliography{bib}
\end{document}